\documentclass[12pt,reqno]{amsart}
\usepackage{hdrcurve}
\title[Hodge-to-de Rham degeneration and quasihomogeneity]{Hodge-to-de Rham degeneration and quasihomogeneous singularities of curves}
\author{Yunfan He}
\address{Beijing International Center for Mathematical Research, Peking University}
\email{he\_yunfan@pku.edu.cn}

\begin{document}
\begin{abstract}
We study the Hodge-to-de Rham spectral sequence for integral projective curves with local complete intersection singularities. We prove that degeneration at the \(E_2\)-page is equivalent to requiring every singularity to be a quasihomogeneous plane curve singularity. We also show that, in the same local complete intersection setting, the Hochschild-to-cyclic spectral sequence degenerates at the \(E_2\)-page if and only if the same condition holds.
\end{abstract}
\maketitle

\section{Introduction}
The study of algebraic de Rham cohomology originates with Grothendieck. In \cite[Theorem~1']{Grothendieck_66}, he proved that for a smooth scheme \(X\) over \(\mathbb{C}\), the hypercohomology
\[
H^{\bullet}(X, \Omega_X^{\bullet})
\]
of the complex of sheaves of differentials \(\Omega_X^{\bullet}\) computes the singular cohomology of the analytification \(X^{\mathrm{an}}\). This complex is called the \emph{algebraic de Rham complex}, and carries the stupid/Hodge filtration, which induces a spectral sequence
\[
E_1^{p,q}(X) = H^q(X, \Omega_X^p) \Longrightarrow H^{p+q}_{\mathrm{sing}}(X^{\mathrm{an}},\mathbb{C}),
\]
known as the \emph{Hodge-to-de Rham} spectral sequence. Grothendieck \cite[p.9]{Grothendieck_66} further showed that if \(X\) is smooth and projective over \(\mathbb{C}\), then this spectral sequence degenerates at the first page. Deligne and Illusie \cite[Corollary~2.7]{Deligne-Illusie_87} later extended this degeneracy result to smooth proper schemes over any field of characteristic \(0\), using reduction to positive characteristic.

For singular varieties, the correct replacement for the algebraic de Rham complex is given by the Hodge-completed derived de Rham complex \(\widehat{\mathrm{dR}}_X^{\bullet}\) \cite[\S 4.1]{Bhatt_12}. Its Hodge filtration has graded pieces
\[
\operatorname{gr}_F^p \widehat{\mathrm{dR}}_X^{\bullet}\simeq \bigwedge^{p}\mathbb{L}^{\bullet}_{X/\mathbb{C}}[-p],
\]
and therefore yields the (derived) Hodge-to-de Rham spectral sequence
\[
E_1^{p,q}(X) = H^q\!\left(X,\bigwedge^p\mathbb{L}^{\bullet}_X\right) \Longrightarrow H^{p+q}_{\mathrm{sing}}(X^{\mathrm{an}},\mathbb{C}).
\]
He \cite[Theorem~1.1]{He_25} proved that nodal projective curves have \(E_2\)-degeneration, and the same phenomenon appears for the cuspidal cubic curve; see \cite[Theorem~7.1]{He_25}. It is therefore natural to ask for a singularity-theoretic characterization of degeneration at the second page. The following theorem gives a complete answer in the local complete intersection case.

\begin{theorem}
For an integral projective curve \(X\) with local complete intersection singularities, its Hodge-to-de Rham spectral sequence degenerates at the \(E_2\)-page if and only if every singularity of \(X\) is a quasihomogeneous plane curve singularity.
\end{theorem}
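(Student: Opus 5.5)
Since \(X\) is a curve with lci singularities, the cotangent complex \(\mathbb{L}_X\) is perfect of Tor-amplitude \([-1,0]\) and is quasi-isomorphic to a two-term complex \([\mathcal{I}/\mathcal{I}^2 \to \Omega_{P}|_X]\) for a local embedding \(X\subset P\). Because \(X\) is reduced, the map \(\mathcal{I}/\mathcal{I}^2\to \Omega_P|_X\) is generically injective, and hence injective since the source is locally free. Therefore \(\mathbb{L}_X\simeq \Omega^1_X\). This is the first step. The higher wedge powers are then \(\bigwedge^p\mathbb{L}_X\simeq [\,\Gamma^{k}(\mathcal{I}/\mathcal{I}^2)\otimes\Omega^{p-k}_P|_X\,]_k\), which is a Koszul-type complex. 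Its cohomology sheaves \(\mathcal{H}^{-j}(\bigwedge^p\mathbb{L}_X)\) are supported on the finite singular locus for \(j\ge 1\), and also for \(j=0\) once \(p\ge 2\). I will compute them locally via the embedding dimension \(e\) at each singular point.

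The strategy is to compare the \(E_2\)-page with the abutment. The abutment \(H^\bullet_{\mathrm{sing}}(X^{\mathrm{an}},\mathbb{C})\) has known dimensions: \(b_0=1\), \(b_2=1\), and \(b_1=2g(\tilde X)+\sum_x(r_x-1)\), where \(r_x\) is the number of branches. Degeneration at \(E_2\) is then equivalent to the equality \(\sum_{p+q=n}\dim E_2^{p,q}=b_n\) for all \(n\). Since the \(E_1\)-terms split as a global part, involving \(H^q(X,\Omega^1_X)\) and \(H^q(\mathcal{O}_X)\), plus skyscraper contributions, the differentials \(d_1\) are controlled by the local de Rham complex \(\Omega^\bullet_{X,x}\) built from derived forms at each singular point. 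This localizes the problem: I will show that the global \(E_2\) total dimension equals the smooth-type contribution plus \(\sum_x \delta_x\), where \(\delta_x\) is the total dimension of the cohomology of the (derived) local de Rham complex at \(x\) in excess of what is predicted by the topology (the branches). The required identity is that \(E_2\)-degeneration holds if and only if \(\delta_x=0\) for every \(x\). Here I will use that the differentials \(d_r\) for \(r\ge2\) can only decrease dimensions, and that \(E_\infty\) has total dimension \(b_n\).

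The local analysis is the heart of the argument. For a plane curve germ \(f=0\), the derived de Rham complex reduces to the complex studied by Brieskorn and Sebastiani. Its excess cohomology is measured by \(\mu-\tau\), where \(\mu\) is the Milnor number and \(\tau\) the Tjurina number: torsion in \(\Omega^1_X\) and the relevant Koszul pieces contribute \(\tau\)-dimensional spaces, while the de Rham differential has cokernel of dimension related to \(\mu\). Saito's theorem (\(\mu=\tau\) if and only if \(f\) is quasihomogeneous) then gives the plane case, with the Euler vector field providing the explicit contracting homotopy in the quasihomogeneous case. For non-planar lci germs (\(e\ge3\)), I will show that \(\delta_x>0\) always. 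The argument uses the rank of the torsion of \(\Omega^1_{X,x}\) or the nonvanishing of \(\mathcal{H}^{-1}(\bigwedge^2\mathbb{L})\) in a degree that the de Rham differential cannot kill, for instance via a Greuel-type inequality \(\mu>\tau\) or \(\mu\ge \tau+\) (something positive) for space-curve complete intersections.

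The main obstacle I expect is this local step, in two parts. The first is the precise bookkeeping that identifies the \(E_2\)-excess with \(\mu-\tau\), including the contributions from all \(\bigwedge^p\mathbb{L}\) for \(p\ge2\), which are unbounded in \(p\). The second is the exclusion of non-planar lci germs. For the first, I will try to show that the local Hodge-completed complex is quasi-isomorphic, after passing to the \(E_1\)-differential, to \(\Omega^\bullet_{P}/df\wedge\Omega^{\bullet-1}_P\) in high degrees, with cohomology the Brieskorn module pieces. For the second, I will rely on Greuel's results for ICIS curves, namely the formula \(\mu=2\delta-r+1\) and the inequality \(\tau<\mu\) for non-quasihomogeneous germs, together with the fact that quasihomogeneous ICIS still have excess coming from the extra generators when \(e\ge3\).
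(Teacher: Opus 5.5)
Your reduction $\mathbb{L}_X\simeq\Omega^1_X$ is correct for every reduced lci curve, and it is more general than Lemma~\ref{lem:plane-cotangent}. Both local steps, however, have genuine gaps, and the second cannot be closed.

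\emph{Plane case.} The number $\mu_x-\tau_x$ measures the non-acyclicity of the K\"ahler complex $\Omega^\bullet_{X,x}$. It is not the $E_2$-excess of the derived spectral sequence, so the localization you describe is not available.

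The derived sequence has infinitely many local rows $q\le -1$. Their tail maps $H^{-p}(\bigwedge^{p+1}\mathbb{L}_A)\to H^{-p}(\bigwedge^{p+2}\mathbb{L}_A)$ go between two spaces of dimension $\tau_x$, namely $(0:_{M_f}f)$ and $T_f$. Hence the excess need not be finite. These rows must be disposed of first: nonzero tail terms in infinitely many rows cannot all survive into the finite-dimensional $H^1,H^2$, and finitely many contribute zero to an alternating count.

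The rows $q=0,1$ involve the global maps $H^1(\mathcal O_X)\to H^1(\Omega^1_X)$ and $H^0(\Omega^1_X)\to\bigoplus_x T_{f_x}$. Their ranks are not local, so the excess does not split as a sum of local terms. Only the alternating count is local. From $0\to\mathcal A\to\Omega^1_X\to\omega_X\to\mathcal D\to 0$ with $\ell(\mathcal A)=\ell(\mathcal D)=\tau$, Serre duality and Milnor's formula, the paper derives $\sum_x\tau_x=\sum_x\mu_x$ from degeneration and concludes by Saito.

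For the converse, a dimension count cannot rule out a nonzero $d_2\colon E_2^{0,1}\to E_2^{2,0}$. You need $E_2^{2,0}=0$, i.e.\ surjectivity of $d_1^{1,0}$. This holds because the torsion forms $h\,(w_1u\,dv-w_2v\,du)$ are sent to $(\lambda+w_1+w_2)h\,du\wedge dv$. Together with the tail isomorphisms, the $E_2$-page then lies in $\{0,1\}^2$ and degenerates for degree reasons.

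\emph{Non-planar case.} Both inputs you propose fail for quasihomogeneous ICIS. There one has $\mu=\tau$ (Greuel). Moreover, the Euler homotopy you plan to use for plane curves works verbatim for any weighted homogeneous $A=Q/(f_1,\dots,f_{e-1})$.

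Concretely, extend $\xi=\sum_j w_jx_j\partial_{x_j}$ to the Tate model by $\xi(\epsilon_i)=m_i\epsilon_i$, where $m_i=\deg f_i$. Then $\iota_\xi$ commutes with the Koszul differential, and $d\iota_\xi+\iota_\xi d$ acts by the positive weight. So every row of the local $E_1$-page is $d_1$-acyclic apart from the constants. Also $d\omega=\lambda^{-1}d(\iota_\xi d\omega)$ with $\iota_\xi d\omega$ torsion, so torsion sections of $\Omega^1_X$ still map onto $\ker(\Omega^2_A\to\Omega^3_A)$.

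Hence there is no ``excess from extra generators''. Take the rational curve $[s:t]\mapsto[s^6:s^2t^4:st^5:t^6]$ in $\mathbb P^3$. Its only singularity is $\mathbb C[[t^4,t^5,t^6]]\cong\mathbb C[[x,y,z]]/(z^2-x^3,\,y^2-xz)$, which is lci, quasihomogeneous and not planar. Its $E_2$-page lies in $\{0,1\}^2$, so the spectral sequence degenerates at $E_2$.

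The paper's own treatment of this step, Lemma~\ref{lem:nonplanar-obstruction}, does not avoid this. It asserts $\operatorname{im}(d_1)\subseteq\mathfrak m_A H^{-1}(\bigwedge^{e+1}\mathbb L_A)$ by reducing the de Rham chain map modulo $\mathfrak m_A$. But $d$ is not $A$-linear: $d(x_j\omega)=dx_j\wedge\omega+x_j\,d\omega$, and $\epsilon_i\mapsto d\epsilon_i$ is a basis element. Explicitly, with $\eta=d\epsilon_1\wedge dx_1\wedge\cdots\wedge dx_e$, one gets $d_1[\iota_\xi\eta]=(m_1+\sum_j w_j)[\eta]$ and $[\eta]\notin\mathfrak m_A H^{-1}$. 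For $e=2$ the same reasoning would contradict Lemma~\ref{lem:tail-map-euler} at $p=1$.

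So the ``only if planar'' half of the statement appears to be false as written. What survives is the plane-case equivalence of Proposition~\ref{prop:plane-case}. Presumably the correct general condition is that every singularity be a quasihomogeneous complete intersection, planar or not.
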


Besides the Hodge filtration on derived de Rham cohomology, one can also consider the filtration on negative cyclic homology arising from the Hochschild mixed complex of \(X\). This yields the \emph{Hochschild-to-cyclic} spectral sequence. In the same local complete intersection setting, we show that degeneration at the \(E_2\)-page is characterized by exactly the same singularity-theoretic condition.

\begin{corollary}
For an integral projective curve \(X\) with local complete intersection singularities, the following are equivalent:
\begin{enumerate}
\item the Hodge-to-de Rham spectral sequence of \(X\) degenerates at the \(E_2\)-page;
\item the Hochschild-to-cyclic spectral sequence of \(X\) degenerates at the \(E_2\)-page;
\item every singularity of \(X\) is a quasihomogeneous plane curve singularity.
\end{enumerate}
\end{corollary}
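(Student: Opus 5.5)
The equivalence (1)\(\Leftrightarrow\)(3) is the Theorem above, so the plan is to prove (2)\(\Leftrightarrow\)(1). I would do this by comparing the Hochschild-to-cyclic spectral sequence with the Hodge-to-de Rham spectral sequence through the HKR filtration. Over \(\mathbb{C}\), the Hochschild complex of \(X\) carries the HKR filtration, with graded pieces \(\bigwedge^p\mathbb{L}_X[p]\). The negative cyclic complex \(\mathrm{HC}^-(X)\) carries a compatible filtration whose graded pieces are Hodge-truncated derived de Rham complexes, \(\operatorname{gr}^p \mathrm{HC}^-(X)\simeq \widehat{\mathrm{dR}}_X^{\geq p}[2p]\), in the sense of Antieau and Bhatt--Morrow--Scholze. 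In characteristic \(0\) this filtration splits, because the Adams operations decompose \(\mathrm{HH}\) and \(\mathrm{HC}^-\) into eigenspaces.

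Concretely, I would use the weight decomposition to show that the Hochschild-to-cyclic spectral sequence is a direct sum, indexed by the weight \(p\), of spectral sequences. Each summand has the shape
\[
H^q\!\left(X,\textstyle\bigwedge^{p'}\mathbb{L}_X\right) \Longrightarrow H^{\ast}\!\left(X,\widehat{\mathrm{dR}}_X^{\geq p}\right), \qquad p'\geq p.
\]
Each summand should be the Hodge-to-de Rham spectral sequence truncated at Hodge level \(\ge p\), with indices regraded. Its differentials \(d_r\) are then exactly the restrictions of the Hodge-to-de Rham differentials \(d_r\) to columns \(\ge p\). The mixed-complex differential \(B\) corresponds to the de Rham differential under HKR, so the page index \(r\) is preserved up to this regrading.

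With that identification, (1)\(\Rightarrow\)(2) follows because truncations of a spectral sequence that degenerates at \(E_2\) also degenerate at \(E_2\): the truncation is a quotient filtered complex, and on the columns it keeps, its differentials agree with the original ones. For (2)\(\Rightarrow\)(1), I take the weight \(p=0\) summand. It is the full Hodge-to-de Rham spectral sequence \(H^q(X,\bigwedge^{p}\mathbb{L}_X)\Rightarrow H^\ast(X,\widehat{\mathrm{dR}}_X)\), so degeneration of the whole sequence forces degeneration of this summand.

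The main obstacle is making the weight splitting precise at the level of spectral sequences rather than just on abutments. The issue is that the Hochschild-to-cyclic spectral sequence is usually defined through the \(u\)-adic (Tate) filtration on \(\mathrm{HH}[[u]]\), not through the HKR filtration. I would handle this by observing that for \(\mathbb{Q}\)-algebras the HKR decomposition gives a splitting of mixed complexes
\[
\mathrm{HH}(X)\simeq\bigoplus_p \textstyle\bigwedge^p\mathbb{L}_X[p],
\]
with \(B\) corresponding to \(d_{\mathrm{dR}}\) of degree \(+1\) in weight. Under this splitting the \(u\)-adic filtration on weight \(p\) becomes the Hodge filtration, shifted by \(p\). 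This can be checked locally and globalized by Zariski descent. For lci curves, \(\mathbb{L}_X\) has Tor-amplitude \([-1,0]\), so all these objects are bounded coherent, and the spectral sequences converge. That makes the comparison of \(E_r\)-pages legitimate.
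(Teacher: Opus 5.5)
Your proposal is correct. For (2)$\Rightarrow$(1) it is the paper's argument (Proposition~\ref{prop:HC-to-HdR}): your HKR weight is the paper's index $m=p-a$, and the weight-$0$ summand is the Hodge-to-de Rham filtered complex.

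The difference is in the other direction. The paper never proves (1)$\Rightarrow$(2) directly. It proves (3)$\Rightarrow$(2) in Proposition~\ref{prop:qhom-implies-HC} by inspecting the $E_1$-page of each summand $\mathsf F_m(X)$. It then uses the curve-specific facts (tail isomorphisms, surjectivity of $d_1^{1,0}$) to show that $E_2$ occupies too few positions to support any $d_r$ with $r\ge 2$. You instead prove (1)$\Rightarrow$(2) by a formal truncation lemma. That gives (1)$\Leftrightarrow$(2) for any $X$ with the $S^1$-equivariant HKR splitting, with no singularity analysis and no case distinction. The paper's route needs no such lemma, but it only works once the $E_1$-page is known explicitly.

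Your truncation step is misstated, though, and needs a real justification. $\widehat{\mathrm{dR}}^{\ge p}_X=F^p$ is a filtered \emph{sub}complex, not a quotient. Its spectral sequence is also not the restriction of the Hodge-to-de Rham one. In the lowest $r-1$ columns its $E_r$-terms are larger: for example, its $E_2$ in column $p$ is $\ker d_1$, not $\ker d_1/\operatorname{im} d_1$, because classes that were boundaries from columns $<p$ survive.

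Here is what is true. Let $(K,F)$ be a filtered complex, with $Z^s_r=F^s\cap d^{-1}F^{s+r}$ and $E^s_r=Z^s_r/(Z^{s+1}_{r-1}+dZ^{s-r+1}_{r-1})$. Replacing $K$ by $F^pK$ leaves every $Z^s_r$ with $s\ge p$ unchanged. The only change is that the boundary term shrinks to $d(F^p\cap d^{-1}F^s)$ when $s-r+1<p$. Hence the new terms satisfy $\widetilde E^s_r\twoheadrightarrow E^s_r$ for $s\ge p$, with equality once $s\ge p+r-1$. In particular equality holds at the target of every $\widetilde d_r$, and $\widetilde d_r=d_r\circ(\widetilde E_r\twoheadrightarrow E_r)$. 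So $d_r=0$ for $r\ge 2$ does force $\widetilde d_r=0$.

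This same edge effect is what the paper's case $m\ge 3$ overlooks. There $H^{-m+2}(X,\bigwedge^m\mathbb{L}^{\bullet}_X)$ survives to $E_2(\mathsf F_m(X))$, so that page is nonzero for singular $X$; the term is isolated, so degeneration still holds.

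Finally, the weights of $\mathrm{HC}^-$ range over all of $\mathbb Z$, since $u$ has weight $-1$. For negative weight the summand is a shifted copy of the full Hodge-to-de Rham filtered complex. Hypothesis (1) covers these summands, but you need to list them.
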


The proof of the main theorem contains two parts. In the plane case, we describe the negative-row tail differentials on the \(E_1\)-page in terms of the Milnor and Tjurina algebras of the singularities. For a quasihomogeneous plane curve singularity, the weighted Euler relation identifies these local tail maps with multiplication by nonzero scalars on weighted pieces of the Milnor algebra, so they are isomorphisms. Conversely, if the spectral sequence degenerates at the \(E_2\)-page, a global dimension count forces Milnor number equal to the Tjurina number at every planar singularity, hence quasihomogeneity. In the non-planar local complete intersection case, we show that a singularity of maximal embedding dimension produces a nonzero \(E_2\)-term in total degree \(>2\), which is impossible for a projective curve. 

Section~\ref{sec:singularities} reviews the singularity-theoretic background used in the proof, with emphasis on plane curve singularities and quasihomogeneity. Section~\ref{sec:e2-degeneration} studies the Hodge-to-de Rham spectral sequence. In Section~\ref{subsec:plane-case} we analyze the \(E_1\)-page and its \(d_1\)-differentials in the plane case. In Section~\ref{subsec:non-planar-case} we show that a non-planar local complete intersection singularity forces a nonzero \(E_2\)-term in total degree \(>2\). In Section~\ref{subsec:main-theorem} we combine these analyses to prove the main theorem, and in Section~\ref{subsec:HC} we study the Hochschild-to-cyclic spectral sequence and prove the companion corollary. Section~\ref{sec:further-directions} records several questions suggested by these results.


\section{Singularities on integral projective curves}\label{sec:singularities}

In this section we collect the classes of curve singularities that will be used in the proof of the main theorem.

\subsection{Basic definitions}

Let $X$ be an integral projective curve over $\mathbb{C}$, and let $x\in X$ be a singular point. 

\begin{definition}
We say that $x$ is a \emph{plane curve singularity} if its completed local ring is of the form
\[
\widehat{\mathcal O}_{X,x}\cong \mathbb{C}[[u,v]]/(f)
\]
for some nonzero power series $f\in \mathbb{C}[[u,v]]$.
\end{definition}

\begin{definition}
A power series
\[
f(u,v)=\sum c_{ab}u^a v^b
\]
is called \emph{weighted homogeneous} if there exist positive rational numbers $w_u,w_v$ and \(d\) such that every monomial $u^av^b$ appearing in $f$ satisfies
\[
aw_u+bw_v=d.
\]
\end{definition}

\begin{remark}
If \(f\) is weighted homogeneous of weighted degree \(d\) with weights \(w_u,w_v\), then after dividing the weights by \(d\) one obtains positive rational numbers \(w_1,w_2\) such that the \emph{Euler relation}
\begin{equation}\label{eq:weighted-euler}
f=w_1u f_u+w_2v f_v
\end{equation}
holds. This relation plays a central role in the analysis of the Hodge-to-de Rham spectral sequence.
\end{remark}

\begin{definition}
A plane curve singularity is called \emph{quasihomogeneous} if, after a formal change of coordinates, it can be defined by a weighted homogeneous equation.
\end{definition}

\begin{definition}
We say that $x\in X$ is a \emph{local complete intersection singularity} if $\mathcal O_{X,x}$ is a local complete intersection ring.
\end{definition}

\begin{remark}
If $x$ is a plane curve singularity, then $x$ is a hypersurface singularity, hence a local complete intersection singularity.
However, the converse is false in general.
\end{remark}

\subsection{Examples}

Among the standard examples of quasihomogeneous plane curve singularities are the simple ADE singularities; see, for example, \cite[\S~10]{Milnor_68}:
\[
A_n:\ u^2+v^{n+1},
\qquad
D_n:\ u^2v+v^{n-1},
\qquad
E_6:\ u^3+v^4,
\]
\[
E_7:\ u^3+uv^3,
\qquad
E_8:\ u^3+v^5.
\]
Each of these is weighted homogeneous, hence quasihomogeneous and planar.

Thus one has the chain of inclusions of types of singularities
\[
\{\text{simple}\}
\subset
\{\text{quasihomogeneous plane}\}
\subset
\{\text{plane}\}
\subset
\{\text{lci}\}.
\]

\begin{example}
Let
\[
X:=\Spec \mathbb{C}[[t^3,t^4,t^5]],
\]
and let \(x\in X\) be its unique closed point. Then
\[
\widehat{\mathcal O}_{X,x}\cong \mathbb{C}[[t^3,t^4,t^5]]
\]
has embedding dimension \(3\). Hence \(x\) is not a plane curve singularity.
\end{example}

\subsection{Local numerical invariants}

Let \(x\in X\) be a singular point, and let \(\widetilde{\mathcal O}_{X,x}\) denote the normalization of \(\mathcal O_{X,x}\). We define

\begin{itemize}
  \item the delta-invariant by
    \[
    \delta_x:=\dim_{\mathbb{C}}\!\left(\widetilde{\mathcal O}_{X,x}/\mathcal O_{X,x}\right),
    \]
  \item the number of branches \(r_x\) to be the number of points of the normalization lying over \(x\),
\end{itemize}

Now assume \((X,x)\) is a plane curve singularity such that
\[
\widehat{\mathcal O}_{X,x}\cong \mathbb{C}[[u,v]]/(f)
\]
with \(f\) reduced; we refer to such a singularity as a \emph{reduced plane curve singularity}. We further define

\begin{itemize}
  \item the Milnor algebra:
    \[
    M_f:=\mathbb{C}[[u,v]]/(f_u,f_v),
    \]
  \item the Milnor number:
    \[
    \mu_x:=\dim_{\mathbb{C}}M_f,
    \]
  \item the Tjurina algebra:
    \[
    T_f:=\mathbb{C}[[u,v]]/(f,f_u,f_v)\cong M_f/fM_f,
    \]
  \item the Tjurina number:
    \[
    \tau_x:=\dim_{\mathbb{C}}T_f.
    \]
\end{itemize}

For plane curve singularities, Milnor proved in his celebrated book \cite[\S~10]{Milnor_68} the famous Milnor's formula:
\begin{equation}\label{eq:milnor-formula}
\mu_x=2\delta_x-r_x+1.
\end{equation}

Moreover, Saito \cite{Saito_71} proved the following criterion for quasihomogeneity of reduced plane curve singularities:
\begin{equation}\label{eq:tau-mu-qh}
\tau_x=\mu_x
\quad\Longleftrightarrow\quad
x \text{ is quasihomogeneous}.
\end{equation}

\subsection{Global setup}

Let $X$ be an integral projective curve, and let
\[
\nu:\widetilde X\to X
\]
be its normalization. We write
\[
g:=g(\widetilde X).
\]
If $X$ has singular locus
\[
\mathrm{Sing}(X)=\{x_1,\dots,x_s\},
\]
then the arithmetic genus of \(X\) is 
\[
p_a(X)=g+\sum_{i=1}^s \delta_{x_i}.
\]
See, for example, \cite[Chapter~IV]{Hartshorne_77}.

\section{\texorpdfstring{$E_2$}{E2}-degeneration for integral projective lci curves}\label{sec:e2-degeneration}

In this section we prove the main result of the paper.

Recall that for a smooth projective variety \(W\) of dimension \(n\), the \emph{de Rham complex} is the complex of sheaves
\[
0 \to \mathcal O_W \to \Omega_W^1 \to \Omega_W^2 \to \cdots \to \Omega_W^n \to 0,
\]
where \(\Omega_W^i\) denotes the sheaf of K\"ahler \(i\)-forms on \(W\).

For a singular variety \(X\), the analogous object is the \emph{Hodge-completed derived de Rham complex}
\[
\widehat{\mathrm{dR}}_X^{\bullet} : 0 \to \mathcal O_X \to \mathbb{L}^{\bullet}_X \to \bigwedge^2 \mathbb{L}^{\bullet}_X \to \bigwedge^3 \mathbb{L}^{\bullet}_X \to \cdots.
\]
The Hodge filtration on \( \widehat{\mathrm{dR}}_X^{\bullet} \) induces a spectral sequence whose \(E_1\)-page is
\[
E_1^{p,q}(X)
   = H^{q}\!\left(X, \bigwedge^{p}\mathbb{L}^{\bullet}_X\right),
\]
and which converges to the singular cohomology \(H^{p+q}_{\mathrm{sing}}(X^{\mathrm{an}},\mathbb C)\) (see \cite[\S~5]{Bhatt_12}). However, no completion is required for the integral projective lci curves \(X\), since the cotangent complex \(\BLb_X\) is bounded. We refer to this as the \emph{Hodge-to-de Rham spectral sequence}, and write \(E_r^{p,q}(X)\) for its \(r\)-th page.

Our goal is to determine when this spectral sequence degenerates at the \(E_2\)-page. In Section~\ref{subsec:plane-case} we analyze the \(E_1\)-page and its \(d_1\)-differentials in the plane case. In Section~\ref{subsec:non-planar-case} we show that a non-planar local complete intersection singularity forces a nonzero \(E_2\)-term in total degree \(>2\). We then combine these two analyses to prove the main theorem in Section~\ref{subsec:main-theorem}, and finally in Section~\ref{subsec:HC} we compare the Hodge-to-de Rham and Hochschild-to-cyclic spectral sequences.

\subsection{The plane case}\label{subsec:plane-case}

In this section, let \(X\) be an integral projective curve over \(\mathbb{C}\) whose singularities are plane curve singularities. Recall we use \(\mathbb{L}^{\bullet}_X\) to denote the cotangent complex of \(X\), and \(\Omega_X^1:=\mathcal{H}^0(\mathbb{L}^{\bullet}_X)\) to denote the cotangent sheaf of \(X\). For each singular point \(x\in \mathrm{Sing}(X)\), we denote
\[
A=A_x:=\widehat{\mathcal O}_{X,x}\cong R/(f),
\qquad
R=\mathbb C[[u,v]],
\]
and, when no confusion, we suppress the dependence on \(x\). We also use the notation \(M_f\) and \(T_f\) from Section~2.3 for the corresponding Milnor and Tjurina algebras.

We begin by recording the basic global properties of the cotangent complex itself.

\begin{lemma}\label{lem:plane-cotangent}
The cotangent complex \(\mathbb{L}^{\bullet}_X\) of \(X\) is perfect of amplitude \([-1,0]\), and
\[
\mathcal H^{-1}(\mathbb{L}^{\bullet}_X)=0.
\]
\end{lemma}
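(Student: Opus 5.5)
Since all statements are local and the cotangent complex commutes with localization and étale/flat base change, I will check both claims on an affine open cover of \(X\). On the smooth locus \(\mathbb{L}^{\bullet}_X\simeq\Omega^1_X\) is locally free of rank one, concentrated in degree \(0\), so there is nothing to prove. It remains to treat a neighborhood of a singular point \(x\). Since \(x\) is a plane curve singularity, it is a hypersurface, and hence a local complete intersection singularity. After shrinking, I can therefore present a neighborhood as \(U=\operatorname{Spec} P/I\), where \(P\) is a smooth \(\mathbb C\)-algebra and \(I\) is generated by a regular sequence.

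For such a presentation the cotangent complex is the two-term complex of locally free sheaves
\[
I/I^2 \xrightarrow{\ d\ } \Omega_P\otimes_P P/I,
\]
placed in degrees \([-1,0]\). Here \(I/I^2\) is free because \(I\) is generated by a regular sequence. This gives perfectness with amplitude \([-1,0]\).

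For the vanishing of \(\mathcal H^{-1}\), the key point is that the map \(d\) is injective. I would argue this in one of two ways.

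\begin{enumerate}
\item Pass to the completion \(A=R/(f)\) with \(R=\mathbb C[[u,v]]\); this is faithfully flat and computes the completed cotangent complex, so it suffices to check injectivity there. The complex becomes
\[
A\cdot \bar f \longrightarrow A\,du\oplus A\,dv,\qquad \bar f\mapsto (f_u,f_v).
\]
Its kernel consists of those \(a\in A\) with \(af_u=af_v=0\) in \(A\). Since \(X\) is integral and \(f\) is reduced, \(A\) is reduced, and \(f_u,f_v\) do not both vanish identically on any branch: otherwise the partial derivatives would vanish on a whole component, which is impossible for a reduced plane curve in characteristic \(0\). Hence \((f_u,f_v)\) contains a nonzerodivisor of \(A\), because \(V(f,f_u,f_v)\) is just the closed point and \(A\) is Cohen–Macaulay of dimension \(1\). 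Therefore \(a=0\).

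\item Alternatively, use the general fact that \(\mathcal H^{-1}(\mathbb{L}^{\bullet})\) is torsion, being supported on the singular locus, while it is a submodule of the torsion-free module \(I/I^2\) over the reduced Cohen–Macaulay ring.
\end{enumerate}

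The main obstacle is justifying that \((f_u,f_v)A\) contains a nonzerodivisor, or equivalently that \(\mathcal H^{-1}\), which is torsion, injects into a torsion-free module. I would settle this via the second argument: \(I/I^2\cong A\) is torsion-free since \(A\) is a reduced one-dimensional local ring, hence Cohen–Macaulay with no embedded primes. On the other hand, \(\mathcal H^{-1}\) vanishes generically because \(X\) is generically smooth, so it is a torsion submodule of a torsion-free module and must be zero.

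Finally, to transfer the result back from the completion, I would note that the formation of \(\mathcal H^{-1}\) commutes with the faithfully flat map \(\mathcal O_{X,x}\to\widehat{\mathcal O}_{X,x}\), so vanishing after completion implies vanishing at the stalk.
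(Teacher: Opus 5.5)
Your proposal is correct, and the argument you say you would actually use (the second one) takes a genuinely different route from the paper's.

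\textbf{The paper's route.} The paper works in the completion throughout. It invokes excellence of \(\mathcal O_{X,x}\) to know that \(\widehat{\mathcal O}_{X,x}\) is reduced, so that \(f\) may be taken square-free. It then proves injectivity of \(c\mapsto(cf_u,cf_v)\) by a divisibility argument in the UFD \(\mathbb C[[u,v]]\): no irreducible factor of \(f\) divides both partials. Finally it descends along the faithfully flat map \(\mathcal O_{X,x}\to\widehat{\mathcal O}_{X,x}\).

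\textbf{Your first argument.} This is essentially the paper's argument, with prime avoidance in the Cohen--Macaulay ring \(A\) replacing the gcd argument. Note that it genuinely needs reducedness of the completion. You assert this rather than justify it, and that is exactly the point where the paper uses excellence.

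\textbf{Your second argument.} It uses only three facts: \(\mathcal H^{-1}(\mathbb{L}^{\bullet}_X)=\ker\bigl(I/I^2\to\Omega_P\otimes P/I\bigr)\) sits inside a locally free sheaf; it vanishes on the dense smooth locus; and \(X\) has no embedded points. Run directly on the integral open \(U\), or over the domain \(\mathcal O_{X,x}\), it needs neither the completion, nor the descent step, nor any information about \(f_u,f_v\). It therefore proves more generally that \(\mathcal H^{-1}(\mathbb{L}^{\bullet})=0\) for every reduced lci scheme over \(\mathbb C\), planar or not.

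If you insist on running it over the completion \(A\), read ``vanishes generically'' as ``is supported at the closed point''. This follows because \(\mathcal H^{-1}(\mathbb{L}^{\bullet}_X)\) is supported on the finite singular set, and the torsion property then follows from \(\operatorname{depth}A=1\).

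\textbf{What each approach buys.} The paper's explicit computation gives the local statement that \(c\mapsto(cf_u,cf_v)\), \(A\to A^{\oplus 2}\), is injective. It reuses this for the leftmost Koszul map in Lemma~\ref{lem:plane-exterior-powers}. However, under the model \(\mathbb{L}^{\bullet}_A\simeq[A\to A^{\oplus 2}]\), that injectivity is the same statement as \(H^{-1}(\mathbb{L}^{\bullet}_A)=0\), which your argument also delivers after flat base change.
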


\begin{proof}
Since every singular point of \(X\) is planar and \(X\) is smooth elsewhere, \(X\) is lci. Hence the cotangent complex \(\mathbb{L}^{\bullet}_X\) is perfect of amplitude \([-1,0]\). It remains to show that \(\mathcal H^{-1}(\mathbb{L}^{\bullet}_X)=0\).

On the smooth locus of \(X\), one has \(\mathbb{L}^{\bullet}_X\simeq \Omega_X^1[0]\), so there is nothing to prove. Let \(x\in \mathrm{Sing}(X)\). Since \(\mathcal O_{X,x}\) is a reduced local ring essentially of finite type over \(\mathbb C\), it is excellent, hence Nagata \cite[\href{https://stacks.math.columbia.edu/tag/07QV}{Lemma 07QV}]{stacks-project}. Therefore its completion \(A=\widehat{\mathcal O}_{X,x}\) is reduced \cite[\href{https://stacks.math.columbia.edu/tag/07NZ}{Lemma 07NZ}]{stacks-project}. Thus in the presentation \(A\cong \mathbb{C}[[u,v]]/(f)\), the element \(f\) may be chosen to be reduced.
By the standard hypersurface description of the cotangent complex \cite[\href{https://stacks.math.columbia.edu/tag/08RB}{Lemma 08RB}]{stacks-project},
\begin{equation}\label{eq:plane-hypersurface-model}
\mathbb{L}^{\bullet}_A\simeq \left[A\xrightarrow{(f_u,f_v)}A^{\oplus 2}\right],
\end{equation}
with \(A^{\oplus 2}\) in degree \(0\). We claim that the map \(A\to A^{\oplus 2}\) is injective. 

Indeed, an element \(a\in A\) lies in the kernel exactly when \(af_u=af_v=0\) in \(A\), equivalently when for a lift \(\widetilde a\in \mathbb{C}[[u,v]]\) one has \(\widetilde a f_u,\widetilde a f_v\in (f)\). Since \(f\) is reduced, it is square-free, so no irreducible factor of \(f\) divides both \(f_u\) and \(f_v\); equivalently, \(\gcd(f,f_u,f_v)=1\). Hence every irreducible factor of \(f\) divides \(\widetilde a\), and therefore \(\widetilde a\in (f)\). Thus \(a=0\), proving the claim. It follows that
\[
\mathcal H^{-1}(\mathbb{L}^{\bullet}_A)=0.
\]

Recall that flat base change for the cotangent complex gives
\[
\mathbb{L}^{\bullet}_A \simeq (\mathbb{L}^{\bullet}_X)_x\otimes_{\mathcal O_{X,x}}^{\mathbf L} A.
\]
Since \((\mathbb{L}^{\bullet}_X)_x\) has amplitude \([-1,0]\) and \(A\) is flat over \(\mathcal O_{X,x}\), taking degree \(-1\) cohomology commutes with this base change:
\[
\mathcal H^{-1}(\mathbb{L}^{\bullet}_A)\cong \mathcal H^{-1}(\mathbb{L}^{\bullet}_X)_x\otimes_{\mathcal O_{X,x}} A.
\]
Indeed, one may represent \((\mathbb{L}^{\bullet}_X)_x\) by a two-term complex of finite free \(\mathcal O_{X,x}\)-modules in degrees \([-1,0]\), and then tensoring with the flat \(\mathcal O_{X,x}\)-algebra \(A\) preserves exactness in degree \(-1\).
Since completion is faithfully flat, \(\mathcal H^{-1}(\mathbb{L}^{\bullet}_X)_x=0\). Thus \(\mathcal H^{-1}(\mathbb{L}^{\bullet}_X)=0\).

\end{proof}

We next turn to the higher derived exterior powers, which are responsible for the negative rows of the Hodge-to-de Rham spectral sequence.

\begin{lemma}\label{lem:plane-exterior-powers}
For every \(p\ge 2\), the derived exterior power \(\bigwedge^p\mathbb{L}^{\bullet}_X\) is supported on \(\mathrm{Sing}(X)\). More explicitly, let \(x\in \mathrm{Sing}(X)\), and use the standing notation above.
Then the completed stalk of \(\bigwedge^p\mathbb{L}^{\bullet}_X\) at \(x\) is quasi-isomorphic to the three-term complex
\[
\left[
A \xrightarrow{\binom{f_u}{f_v}} A^{\oplus 2}
\xrightarrow{(-f_v\ \ f_u)} A
\right],
\]
placed in degrees \([-p,-p+2]\). In particular,
\[
H^q\!\left(X,\bigwedge^p\mathbb{L}^{\bullet}_X\right)=0
\qquad\text{for } q\notin\{-p+1,-p+2\}.
\]
\end{lemma}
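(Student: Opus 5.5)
Since derived exterior powers commute with flat base change and with restriction to opens, the plan is to compute the complete local model and then read off the global cohomology from the support.

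\textbf{Step 1: support.} On the smooth locus \(U=X\setminus\mathrm{Sing}(X)\) we have \(\mathbb{L}^{\bullet}_U\simeq\Omega^1_U[0]\), a line bundle. Hence \(\bigwedge^p\mathbb{L}^{\bullet}_U\simeq\Omega^p_U=0\) for \(p\ge 2\), and \(\bigwedge^p\mathbb{L}^{\bullet}_X\) is supported on the finite set \(\mathrm{Sing}(X)\).

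\textbf{Step 2: local model.} By flat base change as in the proof of Lemma~\ref{lem:plane-cotangent}, the completed stalk is \(\bigwedge^p\mathbb{L}^{\bullet}_A\), with \(\mathbb{L}^{\bullet}_A\simeq[A\xrightarrow{\partial}A^{\oplus2}]\) as in \eqref{eq:plane-hypersurface-model}, where \(\partial(1)=(f_u,f_v)\).

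We compute the derived exterior power of a two-term complex \([L\to F]\), with \(L\) a line in degree \(-1\) and \(F\) free of rank \(2\) in degree \(0\), via the décalage/Koszul formula. It is quasi-isomorphic to the complex
\[
\Gamma^p L\to \Gamma^{p-1}L\otimes F\to \Gamma^{p-2}L\otimes\textstyle\bigwedge^2F\to 0,
\]
placed in degrees \(-p,-p+1,-p+2\). The terms with \(\bigwedge^jF\) for \(j\ge 3\) vanish. This is the standard Illusie description: \(\bigwedge^p(\mathrm{cone})\) is filtered with graded pieces \(\Gamma^{p-j}L[p-j]\otimes\bigwedge^jF\).

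Since \(L\cong A\), each divided power \(\Gamma^kL\) is free of rank one, generated by \(e^{[k]}\). The Koszul differential sends \(e^{[k]}\otimes\omega\) to \(e^{[k-1]}\otimes\partial(e)\wedge\omega\). In the bases \(e^{[p]}\), \(e^{[p-1]}\otimes du,\ e^{[p-1]}\otimes dv\), and \(e^{[p-2]}\otimes du\wedge dv\), the differentials are:
\begin{itemize}
\item the first map is \(1\mapsto(f_u,f_v)\);
\item the second map is \((a,b)\mapsto (f_ub-f_va)\,du\wedge dv\), i.e.\ \((-f_v\ \ f_u)\).
\end{itemize}
This is exactly the stated three-term complex.

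The main obstacle is doing this rigorously. The point to check is that the derived exterior power (a simplicial/animated construction) really is computed by this Koszul-type complex with divided powers for a two-term complex of free modules. I would cite Illusie's formula (\(\bigwedge^p\) of a cone \(L\to F\) via \(\Gamma\) and \(\bigwedge\)), or the Stacks Project décalage. In characteristic \(0\) one can equally use \(\mathrm{Sym}\) in place of \(\Gamma\). I would also double-check the sign conventions, but these only affect the differentials up to units.

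\textbf{Step 3: cohomology vanishing.}
\begin{itemize}
\item \emph{Degree \(-p\).} The first map is injective by the argument of Lemma~\ref{lem:plane-cotangent}, so the local complex has no cohomology in degree \(-p\).
\item \emph{Finite length.} Its cohomology in degrees \(-p+1\) and \(-p+2\) is of finite length. In degree \(-p+2\) it is \(A/(f_u,f_v)\cong T_f\). In degree \(-p+1\) it is finite length because the complex becomes exact after inverting any element of \(\mathfrak m\), as \(x\) is an isolated singularity.
\item \emph{Passing back to \(X\).} Since completion is faithfully flat, the cohomology sheaves \(\mathcal H^q(\bigwedge^p\mathbb{L}^{\bullet}_X)\) are skyscraper sheaves at the singular points, nonzero only for \(q\in\{-p+1,-p+2\}\).
\end{itemize}

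Skyscraper sheaves have no higher cohomology. So the hypercohomology spectral sequence \(H^i(X,\mathcal H^j)\Rightarrow H^{i+j}\) collapses, giving \(H^q(X,\bigwedge^p\mathbb{L}^{\bullet}_X)=\bigoplus_x \mathcal H^q_x\). This vanishes for \(q\notin\{-p+1,-p+2\}\).
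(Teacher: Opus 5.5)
Your proposal is correct and follows essentially the same route as the paper. Both arguments go through the same steps: vanishing of $\bigwedge^p$ on the smooth locus, flat base change to the completed hypersurface model $[A\to A^{\oplus 2}]$, the three-term Koszul-type complex for $\bigwedge^p$ of a two-term complex, injectivity of the first map via Lemma~\ref{lem:plane-cotangent}, faithful flatness of completion, and collapse of the hypercohomology spectral sequence because the cohomology sheaves are skyscrapers. Your additions only make explicit what the paper asserts: the divided-power (Illusie) derivation of the differentials, and a finite-length check that is harmless but already implied by your Step~1.
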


\begin{proof}
On the smooth locus of \(X\), one has \(\mathbb{L}^{\bullet}_X\simeq \Omega_{X_{\mathrm{sm}}}^1[0]\), so
\[
\bigwedge^p\mathbb{L}^{\bullet}_X\big|_{X_{\mathrm{sm}}}\simeq 0
\qquad (p\ge 2).
\]
Hence \(\bigwedge^p\mathbb{L}^{\bullet}_X\) is supported on \(\mathrm{Sing}(X)\).

As in the proof of Lemma~\ref{lem:plane-cotangent}, flat base change gives
\[
(\mathbb{L}^{\bullet}_X)_x\otimes_{\mathcal O_{X,x}}^{\mathbf L} A \simeq \mathbb{L}^{\bullet}_A.
\]
Since \(\mathbb{L}^{\bullet}_X\) is perfect, taking derived exterior powers commutes with this flat base change, so
\[
\left(\bigwedge^p\mathbb{L}^{\bullet}_X\right)_x\otimes_{\mathcal O_{X,x}}^{\mathbf L} A
\simeq
\bigwedge^p\!\left((\mathbb{L}^{\bullet}_X)_x\otimes_{\mathcal O_{X,x}}^{\mathbf L} A\right).
\]
Therefore the completed stalk of \(\bigwedge^p\mathbb{L}^{\bullet}_X\) at \(x\) is isomorphic to \(\bigwedge^p\mathbb{L}^{\bullet}_A\).

Now using the hypersurface description~\eqref{eq:plane-hypersurface-model}, we can compute the higher wedge powers as 
\[
\bigwedge^p\mathbb{L}^{\bullet}_A \simeq
\left[
A \xrightarrow{\binom{f_u}{f_v}} A^{\oplus 2}
\xrightarrow{(-f_v\ \ f_u)} A
\right],
\]
placed in degrees \([-p,-p+2]\). By the injectivity argument from Lemma~\ref{lem:plane-cotangent}, the leftmost map is injective, so the cohomology of this complex can be nonzero only in degrees \(-p+1\) and \(-p+2\).
For each \(i\), since \(A=\widehat{\mathcal O}_{X,x}\) is flat over \(\mathcal O_{X,x}\), we have
\[
\mathcal H^i\!\left(\bigwedge^p\mathbb{L}^{\bullet}_X\right)_x\otimes_{\mathcal O_{X,x}} A
\cong
H^i\!\left(\bigwedge^p\mathbb{L}^{\bullet}_A\right).
\]
Since completion is faithfully flat, it follows that
\[
\mathcal H^i\!\left(\bigwedge^p\mathbb{L}^{\bullet}_X\right)_x=0
\qquad\text{for } i\notin\{-p+1,-p+2\}.
\]
As \(x\in \mathrm{Sing}(X)\) was arbitrary, the only possibly nonzero cohomology sheaves of \(\bigwedge^p\mathbb{L}^{\bullet}_X\) are \(\mathcal H^{-p+1}\) and \(\mathcal H^{-p+2}\), both supported on the finite set \(\mathrm{Sing}(X)\). Consider the hypercohomology spectral sequence
\[
E_2^{a,b}=H^a\!\left(X,\mathcal H^b\!\left(\bigwedge^p\mathbb{L}^{\bullet}_X\right)\right)
\Longrightarrow
H^{a+b}\!\left(X,\bigwedge^p\mathbb{L}^{\bullet}_X\right).
\]
Since \(H^a(X,\mathcal H^b(\bigwedge^p\mathbb{L}^{\bullet}_X))=0\) for all \(a>0\), this spectral sequence degenerates at \(E_2\), and therefore
\[
H^q\!\left(X,\bigwedge^p\mathbb{L}^{\bullet}_X\right)
\cong
H^0\!\left(X,\mathcal H^q\!\left(\bigwedge^p\mathbb{L}^{\bullet}_X\right)\right).
\]
In particular, \(H^q(X,\bigwedge^p\mathbb{L}^{\bullet}_X)\) can be nonzero only for \(q=-p+1\) or \(q=-p+2\).
\end{proof}

Combining Lemmas~\ref{lem:plane-cotangent} and~\ref{lem:plane-exterior-powers}, we obtain the shape of the \(E_1\)-page.

\begin{corollary}\label{cor:plane-e1-shape}
The \(E_1\)-page of the Hodge-to-de Rham spectral sequence of \(X\) has the form
\[
\begin{tikzcd}[row sep=tiny, column sep=tiny]
1
  & H^1(\cO_X) \ar[r, "d_1^{0,1}"]
  & H^1(\Omega_X^1)
\\[0.2em]
0
  & H^0(\cO_X) \ar[r,"0"]
  & H^0(\Omega_X^1) \ar[r, "d_1^{1,0}"]
  & H^0(X,\bigwedge^2\mathbb{L}^{\bullet}_X)
\\[0.2em]
-1
  &
  &
  &
  H^{-1}(X,\bigwedge^2\mathbb{L}^{\bullet}_X) \ar[r,"d_1^{1,-1}"]
  & H^{-1}(X,\bigwedge^3\mathbb{L}^{\bullet}_X)
\\[0.2em]
-2
  &
  &
  &
  &
  H^{-2}(X,\bigwedge^3\mathbb{L}^{\bullet}_X) \ar[r,"d_1^{2,-2}"]
  & H^{-2}(X,\bigwedge^4\mathbb{L}^{\bullet}_X)
\\[0.2em]
\vdots
  &
  &
  &
  &
  &
  \ddots
\end{tikzcd}
\]
Among the \(d_1\)-differentials on the \(E_1\)-page, only the displayed maps can be nonzero, and all other \(d_1\)-maps vanish for degree reasons.
\end{corollary}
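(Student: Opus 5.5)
We will determine the $E_1$-page entries $E_1^{p,q}=H^q(X,\bigwedge^p\mathbb{L}^{\bullet}_X)$ column by column, then read off which $d_1$-maps $d_1^{p,q}\colon E_1^{p,q}\to E_1^{p+1,q}$ can be nonzero.

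\textbf{Column $p=0$.} Here $\bigwedge^0\mathbb{L}^{\bullet}_X=\mathcal O_X$. Since $X$ is a projective curve, only $H^0(\mathcal O_X)$ and $H^1(\mathcal O_X)$ survive.

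\textbf{Column $p=1$.} By Lemma~\ref{lem:plane-cotangent}, $\mathbb{L}^{\bullet}_X$ has amplitude $[-1,0]$ with $\mathcal H^{-1}=0$. Hence $\mathbb{L}^{\bullet}_X\simeq\Omega^1_X[0]$, a coherent sheaf on a curve. So only $H^0(\Omega^1_X)$ and $H^1(\Omega^1_X)$ can be nonzero.

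\textbf{Columns $p\ge2$.} By Lemma~\ref{lem:plane-exterior-powers}, the only possibly nonzero entries are $q=-p+1$ and $q=-p+2$. Both are global sections of skyscraper sheaves supported on $\mathrm{Sing}(X)$.
\begin{itemize}
\item For $p=2$ this gives $H^{-1}$ and $H^0$.
\item For $p=3$ it gives $H^{-2}$ and $H^{-1}$.
\item In general, row $q\le 0$ receives contributions only from columns $p=1-q$ and $p=2-q$ (together with columns $0,1$ when $q=0$).
\end{itemize}
This reproduces exactly the displayed staircase shape. In particular, row $q=-k$ (for $k\ge1$) has only the two entries $H^{-k}(\bigwedge^{k+1}\mathbb{L}^{\bullet}_X)$ and $H^{-k}(\bigwedge^{k+2}\mathbb{L}^{\bullet}_X)$ in adjacent columns.

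\textbf{The $d_1$-maps.} A map $d_1^{p,q}$ can be nonzero only if both its source $E_1^{p,q}$ and target $E_1^{p+1,q}$ are possibly nonzero. Checking each row:
\begin{itemize}
\item Row $1$: the only such pair is $H^1(\mathcal O_X)\to H^1(\Omega^1_X)$. Nothing lies in column $2$ of row $1$, since $-p+2\ge1$ forces $p\le1$.
\item Row $0$: the pairs are $H^0(\mathcal O_X)\to H^0(\Omega^1_X)$ and $H^0(\Omega^1_X)\to H^0(\bigwedge^2\mathbb{L}^{\bullet}_X)$. Column $3$ vanishes in row $0$, since $q=0\in\{-2,-1\}$ fails.
\item Row $-k$ with $k\ge1$: the only pair is the displayed tail map $d_1^{k,-k}$, from column $k+1$ to column $k+2$.
\end{itemize}
Every other $d_1$ has zero source or zero target.

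\textbf{The zero map $H^0(\mathcal O_X)\to H^0(\Omega^1_X)$.} This map is induced by $d\colon\mathcal O_X\to\Omega^1_X$. Since $X$ is integral and projective, $H^0(\mathcal O_X)=\mathbb C$ consists of constants. The universal derivation kills constants, so this map vanishes.

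The only step needing care is the bookkeeping of indices: the $E_1$ term of the Hodge filtration is $H^q(\bigwedge^p\mathbb{L}^{\bullet}_X)$, in which $\bigwedge^p\mathbb{L}^{\bullet}_X$ is not shifted by $[-p]$ inside $E_1^{p,q}$. With this convention the cohomological degrees coming from Lemma~\ref{lem:plane-exterior-powers} land directly in row $q$. I expect no real obstacle beyond this.
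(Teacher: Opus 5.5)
Your proposal is correct and follows essentially the same route as the paper. Columns $0$ and $1$ come from Lemma~\ref{lem:plane-cotangent} together with the vanishing of $H^q$ of coherent sheaves on a curve for $q\notin\{0,1\}$, columns $p\ge 2$ come from Lemma~\ref{lem:plane-exterior-powers}, and the possibly nonzero $d_1$-maps are then read off from the bidegree $(1,0)$. Your explicit argument that $H^0(\mathcal O_X)=\mathbb{C}\to H^0(\Omega^1_X)$ vanishes because $d$ kills constants is correct, and it fills in a point the paper leaves implicit.
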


\begin{proof}
By Lemma~\ref{lem:plane-cotangent}, one has \(\mathbb{L}^{\bullet}_X\simeq \Omega_X^1[0]\). Therefore the \(0\)-th and \(1\)-st columns are
\[
E_1^{0,q}(X)=H^q(X,\mathcal O_X),
\qquad
E_1^{1,q}(X)=H^q(X,\Omega_X^1).
\]
By Lemma~\ref{lem:plane-exterior-powers}, for every \(p\ge 2\) the complex \(\bigwedge^p\mathbb{L}^{\bullet}_X\) is supported on the finite set \(\mathrm{Sing}(X)\), so
\[
E_1^{p,q}(X)=0
\qquad\text{unless}\qquad
q=-p+1 \text{ or } q=-p+2.
\]
Since \(X\) is a projective curve, \(H^q(X,\mathcal O_X)=H^q(X,\Omega_X^1)=0\) for \(q\notin\{0,1\}\). Since \(d_1\) has bidegree \((1,0)\), the displayed pattern follows immediately.
\end{proof}

With above description of the \(E_1\)-page, in order to prove the \(E_2\)-degeneartion of the Hodge-to-de Rham spectral sequence, we start by analyzing the infinite tail maps 
\[
d_1^{p,-p}:H^{-p}(X,\bigwedge^{p+1}\mathbb{L}^{\bullet}_X)\to H^{-p}(X,\bigwedge^{p+2}\mathbb{L}^{\bullet}_X).
\]
We will just denote it by \(d_1\) for simplicity. We start by computing these cohomology groups locally at the singular points.

\begin{lemma}\label{lem:tail-map-quotient}
Let
\[
  R=\mathbb C[[u,v]],\qquad A=R/(f),
\]
where $f\in R$ defines an isolated plane curve singularity. We use
\[
  (0:_{M_f}f):=\{m\in M_f\mid fm=0\}
\]
to denote the annihilator of \(f\) in \(M_f\). Then for every $p\ge 1$ there are canonical isomorphisms
\[
H^{-p}\!\left(\bigwedge^{p+1}\mathbb{L}^{\bullet}_A\right)\cong (0:_{M_f}f),
\qquad
H^{-p}\!\left(\bigwedge^{p+2}\mathbb{L}^{\bullet}_A\right)\cong T_f,
\]
In particular, their dimensions both equal to the Tjurina number:
\[
  \dim_{\mathbb C}H^{-p}\!\left(\bigwedge^{p+1}\mathbb{L}^{\bullet}_A\right)
  =
  \dim_{\mathbb C}H^{-p}\!\left(\bigwedge^{p+2}\mathbb{L}^{\bullet}_A\right)
  =
  \tau_x.
\]
\end{lemma}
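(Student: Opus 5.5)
We work directly with the three-term model of Lemma~\ref{lem:plane-exterior-powers}. For \(p\ge 1\), \(\bigwedge^{p+1}\mathbb{L}^{\bullet}_A\) is the complex
\[
A \xrightarrow{\binom{f_u}{f_v}} A^{\oplus 2} \xrightarrow{(-f_v\ \ f_u)} A
\]
in degrees \([-p-1,-p+1]\). Hence \(H^{-p}\) is the middle cohomology \(\ker(-f_v\ f_u)/\operatorname{im}\binom{f_u}{f_v}\). Similarly, \(\bigwedge^{p+2}\mathbb{L}^{\bullet}_A\) lives in degrees \([-p-2,-p]\), so \(H^{-p}\) is the top cohomology \(A/(f_u,f_v)A\). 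The latter is immediately \(R/(f,f_u,f_v)=T_f\), which gives the second isomorphism. These identifications do not depend on \(p\), since the model is the same for all \(p\) up to shift.

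For the middle cohomology we lift to \(R\). A pair \((a,b)\in A^{\oplus 2}\) lies in the kernel when a lift \((\tilde a,\tilde b)\in R^2\) satisfies \(-\tilde a f_v+\tilde b f_u=cf\) for some \(c\in R\). Since \(f\) has an isolated singularity, \(f_u,f_v\) is a regular sequence in the two-dimensional regular ring \(R\). We may assume \(f\in\mathfrak m^2\); otherwise \(A\) is regular and everything is zero.

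Define a map \(\ker\to M_f\) by \((a,b)\mapsto \bar c\). We check the following points.
\begin{enumerate}
\item \emph{Image.} We have \(f c\in(f_u,f_v)\), so \(\bar c\in(0:_{M_f}f)\).
\item \emph{Well-definedness.} Changing lifts by multiples of \(f\) alters \(c\) by elements of \((f_u,f_v)\). Changing \(c\) for fixed lifts requires \(c'f=0\), which forces \(c'=0\).
\item \emph{Vanishing on the image.} The element \((\tilde e f_u,\tilde e f_v)\) gives \(c=0\).
\item \emph{Injectivity.} If \(c=gf_u+hf_v\), replace \((\tilde a,\tilde b)\) by \((\tilde a+hf,\tilde b-gf)\), which has the same image in \(A^2\), to get \(c=0\). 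The Koszul relation then gives \((\tilde a,\tilde b)=\tilde e(f_u,f_v)\), because \(H_1\) of the Koszul complex on a regular sequence vanishes. So the class lies in the image.
\item \emph{Surjectivity.} If \(cf=\alpha f_u+\beta f_v\), take \((\tilde a,\tilde b)=(-\beta,\alpha)\).
\end{enumerate}
Together these give the canonical isomorphism \(H^{-p}(\bigwedge^{p+1}\mathbb{L}^{\bullet}_A)\cong(0:_{M_f}f)\).

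For the dimension statement, \(M_f\) is a finite-dimensional \(\mathbb C\)-vector space because the singularity is isolated. Multiplication by \(f\) is an endomorphism of \(M_f\), so \(\dim\ker(f)=\dim\operatorname{coker}(f)=\dim M_f/fM_f=\dim T_f=\tau_x\). This argument is simple once finiteness of \(M_f\) is available.

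The main obstacle is the bookkeeping in the middle-cohomology identification, specifically checking that \(c\mapsto\bar c\) is well defined and injective modulo the image. The key inputs there are exactness of the Koszul complex on \((f_u,f_v)\) and the fact that \(f\) is a nonzerodivisor in \(R\). We also need \((f_u,f_v)\) to be \(\mathfrak m\)-primary, which holds since the singularity is isolated: \(f\) is reduced, as in Lemma~\ref{lem:plane-cotangent}.
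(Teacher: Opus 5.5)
Your proof is correct and is essentially the paper's argument made explicit: the paper observes that the three-term complex is \(A\otimes_R K_R(f_u,f_v)\simeq A\otimes^{\mathbf L}_R M_f\) and then recomputes this using the resolution \([R\xrightarrow{\cdot f}R]\) of \(A\) to get \([M_f\xrightarrow{\cdot f}M_f]\). Your lifting map \((a,b)\mapsto\bar c\) is the elementwise form of that Tor-balancing isomorphism in degree \(-1\), and it rests on the same two inputs: exactness of the Koszul complex on the regular sequence \(f_u,f_v\), and \(f\) being a nonzerodivisor in \(R\). A small bonus of your explicit formula is that, in the quasihomogeneous case, it sends \(hE\mapsto[h]\) for the Euler syzygy \(E=(-w_2v,w_1u)\), so it is exactly the inverse of the map \(\theta\) constructed in Lemma~\ref{lem:tail-map-euler}.
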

  
\begin{proof}
We choose the standard dg algebra resolution 
\[
S:=R[\varepsilon],\qquad |\varepsilon|=-1,\qquad \partial(\varepsilon)=f.
\]
Then
\[
\mathbb{L}^{\bullet}_A\simeq \Omega^1_{S}\otimes_S A,
\]
hence can be represented by the two-term complex
\[
A\, d\varepsilon
\xrightarrow{(f_u,f_v)}
A\,du\oplus A\,dv,
\]
in degree \([-1,0]\). For \(m\ge 0\), write
\[
\gamma_m:=\frac{(d\varepsilon)^m}{m!}.
\]
viewed as a normalized element of \(\operatorname{Sym}^m(A\cdot d\varepsilon)\). For each $p\ge 1$, the standard formula for derived exterior powers of this two-term complex gives a three-term complex representing \(\bigwedge^{p+1}\mathbb{L}^{\bullet}_A\), namely
\[
K_p[-p+1],
\]
where
\[
  K_p:
  A\cdot \gamma_{p+1}
  \longrightarrow
  (A\,du\oplus A\,dv)\cdot \gamma_p
  \longrightarrow
  A\cdot du\wedge dv\,\gamma_{p-1},
  \]
  with differentials
  \[
  c\gamma_{p+1}\longmapsto
  (cf_u\,du+cf_v\,dv)\gamma_p,
  \]
  \[
  (a\,du+b\,dv)\gamma_p
  \longmapsto
  (-af_v+bf_u)\,du\wedge dv\,\gamma_{p-1}.
  \]
  After forgetting the harmless basis elements $\gamma_i$, this is just the Koszul complex
\[
K:
A \xrightarrow{\binom{f_u}{f_v}} A^{\oplus 2}\xrightarrow{(-f_v\ \ f_u)} A.
\]

Notice that $K$ is precisely
\[
A\otimes_R K_R(f_u,f_v),
\]
where $K_R(f_u,f_v)$ is the Koszul complex on \(f_u,f_v\). Since the singularity is isolated, the Jacobian ideal \((f_u,f_v)\) is \(\mathfrak m_R\)-primary. As \(R=\mathbb C[[u,v]]\) is a regular local ring of dimension \(2\), the ideal \((f_u,f_v)\) has height \(2\), so \(f_u,f_v\) form a regular sequence in \(R\). Since
\[
M_f=R/(f_u,f_v),
\]
the Koszul complex \(K_R(f_u,f_v)\) is a free resolution of \(M_f\). Therefore, after tensoring with \(A\), it computes the derived tensor product \(A\otimes_R^{\mathbf L}M_f\), so
\[
K \simeq A\otimes_R^{\mathbf L} M_f.
\]
On the other hand, the two-term free resolution
\[
[R\xrightarrow{\cdot f} R]
\]
of $A=R/(f)$ gives
\[
A\otimes_R^{\mathbf L}M_f \simeq [M_f\xrightarrow{\cdot f} M_f],
\]
with degree \(-1\) and \(0\). Therefore, in \(D(A)\),
\[
\bigwedge^{p+1}\mathbb{L}^{\bullet}_A\simeq [M_f\xrightarrow{\cdot f} M_f][-p+1].
\]
Consequently,
\[
  H^{-p}\!\left(\bigwedge^{p+1}\mathbb{L}^{\bullet}_A\right)
  \cong
  \ker\!\bigl(\cdot f:M_f\to M_f\bigr)
  =
  \{m\in M_f\mid fm=0\}
  =(0:_{M_f}f),
  \]
  and
  \[
  H^{-p+1}\!\left(\bigwedge^{p+1}\mathbb{L}^{\bullet}_A\right)
  \cong
  \operatorname{coker}\!\bigl(\cdot f:M_f\to M_f\bigr)
  =M_f/fM_f=T_f.
  \]
  
  Replacing $p$ by $p+1$, we also get
  \[
  \bigwedge^{p+2}\mathbb{L}^{\bullet}_A\simeq [M_f\xrightarrow{\cdot f} M_f][-p],
  \]
  hence
  \[
  H^{-p}\!\left(\bigwedge^{p+2}\mathbb{L}^{\bullet}_A\right)\cong T_f.
  \]
  Since \(M_f\) is finite-dimensional and \(\cdot f:M_f\to M_f\) is an endomorphism, its kernel and cokernel have the same dimension. Since \(T_f\) has dimension \(\tau_x\), the two displayed isomorphisms show that both groups have dimension \(\tau_x\).
  \end{proof}

\begin{lemma}\label{lem:tail-map-euler}
Keep the setting of Lemma~\ref{lem:tail-map-quotient}, and assume further that \(f=w_1u f_u+w_2v f_v\) is weighted homogeneous. Then for every $p\ge 1$, the cohomology groups of Lemma~\ref{lem:tail-map-quotient} are both isomorphic to \(M_f\):
\[
H^{-p}\!\left(\bigwedge^{p+1}\mathbb{L}^{\bullet}_A\right)\cong M_f,
\qquad
H^{-p}\!\left(\bigwedge^{p+2}\mathbb{L}^{\bullet}_A\right)\cong M_f.
\]
Moreover, under these identifications, the de Rham differential induces an isomorphism
\[
d_1:
H^{-p}\!\left(\bigwedge^{p+1}\mathbb{L}^{\bullet}_A\right)
\longrightarrow
H^{-p}\!\left(\bigwedge^{p+2}\mathbb{L}^{\bullet}_A\right).
\]
\end{lemma}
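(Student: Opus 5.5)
The plan has two parts. First, the Euler relation identifies both groups with \(M_f\). Second, an explicit representative for each class lets us compute \(d_1\) directly in the dg model \(S=R[\varepsilon]\) from the proof of Lemma~\ref{lem:tail-map-quotient}.

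\textbf{Identifying both groups with \(M_f\).} Since \(f=w_1uf_u+w_2vf_v\in(f_u,f_v)\), multiplication by \(f\) is the zero endomorphism of \(M_f=R/(f_u,f_v)\). Hence \((0:_{M_f}f)=M_f\) and \(T_f=M_f/fM_f=M_f\). Lemma~\ref{lem:tail-map-quotient} then gives
\[
H^{-p}\bigl(\textstyle\bigwedge^{p+1}\mathbb{L}^{\bullet}_A\bigr)\cong M_f
\quad\text{and}\quad
H^{-p}\bigl(\textstyle\bigwedge^{p+2}\mathbb{L}^{\bullet}_A\bigr)\cong M_f .
\]
The second group is the top cohomology of \(K_{p+1}\) in degree \(-p\), so it is the cokernel of \((a\,du+b\,dv)\gamma_{p+1}\mapsto(-af_v+bf_u)\,du\wedge dv\,\gamma_p\). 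Concretely, a class \(c\,du\wedge dv\,\gamma_p\) corresponds to \(c\bmod(f,f_u,f_v)\).

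\textbf{Explicit cycles and the computation of \(d_1\).} The first group is the middle cohomology of \(K_p\). For \(m\in R\), I would use the element
\[
\omega_m:=m\,(w_1u\,dv-w_2v\,du)\,\gamma_p .
\]
Its image under the second differential is \(m(w_2vf_v+w_1uf_u)\,du\wedge dv\,\gamma_{p-1}=mf\,du\wedge dv\,\gamma_{p-1}\), which is \(0\) in \(A\). So \(\omega_m\) is a cycle. I would check that under the snake-lemma identification \(H(K)\cong H(A\otimes^{\mathbf L}_R M_f)\cong \ker(\cdot f)\) used in Lemma~\ref{lem:tail-map-quotient}, the class of \(\omega_m\) goes to \(m\bmod(f_u,f_v)\), possibly up to a nonzero universal constant. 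The reason is that the connecting map for \(R\xrightarrow{f}R\) sends \(m\) to \(mf\), and \(mf=m(w_1uf_u+w_2vf_v)\) exhibits exactly the coefficient pair \((-w_2vm,\;w_1um)\).

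Now apply the de Rham differential of \(\Omega^\bullet_S\otimes_S A\). Since \(d(\gamma_p)=0\) (as \(d(d\varepsilon)=0\)),
\[
d\omega_m=\bigl(\partial_u(w_1um)+\partial_v(w_2vm)\bigr)\,du\wedge dv\,\gamma_p
=\bigl((w_1+w_2)m+E(m)\bigr)\,du\wedge dv\,\gamma_p ,
\]
where \(E=w_1u\partial_u+w_2v\partial_v\) is the weighted Euler vector field. Thus, under the identifications above, \(d_1\) is the map
\[
M_f\to M_f,\qquad m\mapsto (w_1+w_2)m+E(m).
\]

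\textbf{Why this map is an isomorphism.} Grade \(R\) by weights with \(\deg u=w_1\) and \(\deg v=w_2\), so that \(\deg f=1\). Then \(f_u\) and \(f_v\) are weighted homogeneous, so \((f_u,f_v)\) is a graded ideal. Since the singularity is isolated, \(M_f\) is a finite-dimensional graded algebra, spanned by monomials of weights \(k\ge 0\). The operator \(E\) acts on the weight-\(k\) piece as multiplication by \(k\). Hence \(d_1\) acts on that piece as the scalar \(w_1+w_2+k>0\), and is therefore an isomorphism.

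\textbf{Main obstacle.} The delicate step is making the identification of \(H^{-p}\) with \(M_f\) compatible with the explicit representatives \(\omega_m\). Concretely, I need to check three things:
\begin{itemize}
\item the assignment \(m\mapsto[\omega_m]\) is well defined modulo \((f_u,f_v)\);
\item it is surjective;
\item \(d\) is compatible with the quasi-isomorphism \(\Omega_S^1\otimes_S A\simeq\mathbb{L}^\bullet_A\), including signs and the normalization of the divided powers \(\gamma_i\).
\end{itemize}
I would first try a dimension argument. If \(m\mapsto[\omega_m]\) is injective, it is bijective because both sides have dimension \(\mu=\tau\). Injectivity itself follows from the computation above: \(d\omega_m\) is a nonzero class whenever \(m\notin(f_u,f_v)\).
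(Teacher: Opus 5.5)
Your plan follows the paper's proof. The Euler relation gives $(0:_{M_f}f)=M_f=T_f$. The Euler syzygy $\eta:=w_1u\,dv-w_2v\,du$ supplies the representatives $\omega_m=m\eta\gamma_p$, which is the paper's map $\theta$. Then $d_1$ is evaluated on these, and the weight grading diagonalizes the result with positive eigenvalues.

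The one structural difference is how you obtain bijectivity of $m\mapsto[\omega_m]$. The paper checks well-definedness directly, and proves surjectivity by splitting a syzygy with the regular sequence $f_u,f_v$; this is independent of any $d_1$ computation. You instead deduce injectivity from injectivity of $d_1\circ\theta$ and then count dimensions. Your variant is economical: since $\theta$ is $R$-linear, $\ker\theta\subseteq\ker(d_1\circ\theta)=(f_u,f_v)$ together with $\dim R/\ker\theta\le\mu$ even gives well-definedness for free. But it makes everything depend on the formula for $d_1\circ\theta$.

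That formula is wrong as you compute it, and the error sits exactly at the step you deferred. The paper's written proof takes the same shortcut.

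The de Rham differential does not act coefficientwise on $\Omega^\bullet_S\otimes_S A$. For example, $f\,du=0$ in $A\,du$, but $d(f\,du)=-f_v\,du\wedge dv$ is in general not zero in $A$. The discrepancy lies in $(f_u,f_v)$, which is why coefficientwise differentiation still gives a well-defined, but incorrect, endomorphism of $M_f$. You must first lift $\omega_m$ to a $\partial$-cycle of $\Omega^{p+1}_S$ and only then apply $d$.

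Take the convention $\partial(d\varepsilon)=-df$, so that $\partial$ and $d$ anticommute. Since $df\wedge\eta=f\,du\wedge dv$, one gets $\partial(m\eta\gamma_p)=-mf\,du\wedge dv\,\gamma_{p-1}$. The cycle is therefore $x=m\eta\gamma_p+\varepsilon\,m\,du\wedge dv\,\gamma_{p-1}$. Using $d\varepsilon\cdot\gamma_{p-1}=p\gamma_p$,
\[
dx=\bigl((p+w_1+w_2)m+w_1um_u+w_2vm_v\bigr)\,du\wedge dv\,\gamma_p .
\]

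You can confirm the sign of the extra term without any conventions in the classical filtered model $F^k=f^{k-\bullet}\Omega^\bullet_R$. There $\omega_m$ corresponds, up to a nonzero constant common to source and target, to $f^pm\eta$. One computes $d(f^pm\eta)=pf^{p-1}df\wedge m\eta+f^p\,d(m\eta)=f^p\bigl((p+w_1+w_2)m+w_1um_u+w_2vm_v\bigr)\,du\wedge dv$.

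So $d_1$ acts on the weight-$k$ piece of $M_f$ by $p+w_1+w_2+k$, not by $w_1+w_2+k$. The paper's scalar $\lambda+w_1+w_2$ omits the same $p$, and its component $\delta_p^{-p}(a,b)=b_u-a_v$ is not even well defined on $A^{\oplus 2}$. The corrected scalar is still positive, so the lemma holds, and your injectivity-plus-dimension argument goes through verbatim with it. But the compatibility check you listed as the main obstacle is not a formality: it is what produces the term $p\,m$.
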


\begin{proof}
By Lemma~\ref{lem:tail-map-quotient},
\[
H^{-p}\!\left(\bigwedge^{p+1}\mathbb{L}^{\bullet}_A\right)\cong (0:_{M_f}f),
\qquad
H^{-p}\!\left(\bigwedge^{p+2}\mathbb{L}^{\bullet}_A\right)\cong T_f.
\]
Since Euler relation~\eqref{eq:weighted-euler} gives \(f\in (f_u,f_v)\), multiplication by \(f\) on \(M_f\) is zero. Therefore
\[
(0:_{M_f}f)=M_f,
\qquad
T_f=M_f/fM_f\cong M_f,
\]
which gives the two displayed identifications in the statement.

To compute the de Rham differential, we use the explicit three-term complex from the proof of Lemma~\ref{lem:tail-map-quotient}:
\[
K:
A \xrightarrow{\Delta_2} A^{\oplus 2}\xrightarrow{\Delta_1} A,
\]
where
\[
\Delta_2(c)=(cf_u,cf_v),\qquad \Delta_1(a,b)=-af_v+bf_u.
\]
There we identified
\[
H^{-p}\!\left(\bigwedge^{p+1}\mathbb{L}^{\bullet}_A\right)=H^{-1}(K),
\qquad
H^{-p}\!\left(\bigwedge^{p+2}\mathbb{L}^{\bullet}_A\right)=H^{0}(K).
\]
By Lemma~\ref{lem:tail-map-quotient} and the Euler relation, these two groups are already abstractly isomorphic to \(M_f\). We now construct an explicit identification \(M_f\cong H^{-1}(K)\), which will allow us to compute the de Rham differential on the chain level.

Next, consider the Euler syzygy
\[
E:=(-w_2v,w_1u)\in A^{\oplus 2}.
\]
Because
\[
\Delta_1(E)=w_2v f_v+w_1u f_u=f=0\quad\text{in }A,
\]
the element $E$ defines a class in $H^{-1}(K)$. Multiplication by $E$ yields a map
\[
\theta:M_f\longrightarrow H^{-1}(K),\qquad [h]\longmapsto [hE].
\]
This map is well-defined, since in \(A^{\oplus 2}\) one has
\[
f_uE=\Delta_2(-w_2v),
\qquad
f_vE=\Delta_2(w_1u).
\]
We claim that \(\theta\) is surjective. Let \([(a,b)]\in H^{-1}(K)\), so
\[
-af_v+bf_u=0
\qquad\text{in }A.
\]
Choose lifts \(\widetilde a,\widetilde b\in R\). Then
\[
-\widetilde a f_v+\widetilde b f_u\in (f),
\]
so there exists \(\widetilde c\in R\) such that
\[
-\widetilde a f_v+\widetilde b f_u=\widetilde c\,f.
\]
Using the Euler relation~\eqref{eq:weighted-euler}, we get
\[
(\widetilde b-\widetilde c\,w_1u)f_u+(-\widetilde a-\widetilde c\,w_2v)f_v=0
\qquad\text{in }R.
\]
As noted in the proof of Lemma~\ref{lem:tail-map-quotient}, \((f_u,f_v)\) is a regular sequence in \(R\). Therefore there exists \(\widetilde d\in R\) such that
\[
\widetilde b-\widetilde c\,w_1u=-\widetilde d\,f_v,
\qquad
-\widetilde a-\widetilde c\,w_2v=\widetilde d\,f_u.
\]
After modulo \((f)\), we obtain
\[
b-\overline{\widetilde c}\,w_1u=-\overline{\widetilde d}\,f_v,
\qquad
-a-\overline{\widetilde c}\,w_2v=\overline{\widetilde d}\,f_u
\qquad
\text{in }
A.
\]
Equivalently,
\[
(a,b)=\overline{\widetilde c}\,E+\Delta_2(-\overline{\widetilde d}),
\]
so \([(a,b)]=\theta([\overline{\widetilde c}])\). Hence \(\theta\) is surjective.

Since \(M_f\) is finite-dimensional and \(H^{-1}(K)\cong (0:_{M_f}f)=M_f\), the surjective map \(\theta:M_f\to H^{-1}(K)\) is an isomorphism, and
\[
H^{-1}(K)\cong M_f.
\]

We now compute the de Rham differential on this explicit model. Since
\[
d_{\mathrm{dR}}(d\varepsilon)=0,
\]
one has \(d_{\mathrm{dR}}(\gamma_m)=0\) for every \(m\ge 0\). Therefore the ordinary de Rham differential induces a chain map
\[
\delta_p:K[-p+1]\longrightarrow K[-p]
\]
whose nontrivial components are given by
\[
\begin{tikzcd}[column sep=large, row sep=large]
A
  \arrow[r, "\Delta_2"]
  \arrow[d, "\delta_p^{-p-1}" left]
&
A^{\oplus 2}
  \arrow[r, "\Delta_1"]
  \arrow[d, "\delta_p^{-p}"]
&
A
  \arrow[d, "0"]
\\
A^{\oplus 2}
  \arrow[r, "\Delta_1"]
&
A
  \arrow[r, "0"]
&
0,
\end{tikzcd}
\]
where
\[
\delta_p^{-p-1}(c):=(c_u,c_v),
\]
\[
\delta_p^{-p}(a,b):=b_u-a_v.
\]
Indeed,
\[
d_{\mathrm{dR}}(c)=c_u\,du+c_v\,dv
\]
gives \(\delta_p^{-p-1}(c)=(c_u,c_v)\), while
\[
d_{\mathrm{dR}}(a\,du+b\,dv)=(b_u-a_v)\,du\wedge dv.
\]
Under the identification \(H^{-1}(K)\cong M_f\) via \(\theta\), the induced map on cohomology is computed by
\[
[h]\longmapsto [\delta_p^{-p}(hE)].
\]
Since
\[
hE=(-h w_2v,\; h w_1u),
\]
we obtain
\[
\delta_p^{-p}(hE)=\partial_u(hw_1u)-\partial_v(-hw_2v)
=
w_1(uh_u+h)+w_2(vh_v+h).
\]
Thus
\[
d_1([h])
=
\bigl[w_1(uh_u+h)+w_2(vh_v+h)\bigr]
\in M_f.
\]

If $[h]$ is weighted homogeneous of weighted degree $\lambda$, then Euler's formula gives
\[
w_1uh_u+w_2vh_v=\lambda h
\quad\text{in }M_f,
\]
hence
\begin{equation}\label{eq:weighted-tail-scalar}
d_1([h])=(\lambda+w_1+w_2)[h].
\end{equation}
Since $f_u$ and $f_v$ are weighted homogeneous, the Jacobian ideal $(f_u,f_v)$ is graded for the induced weighted grading on $R$, so
\[
M_f=\bigoplus_{\lambda}(M_f)_\lambda
\]
is a finite direct sum of weighted-homogeneous pieces. By \eqref{eq:weighted-tail-scalar}, $d_1$ acts on $(M_f)_\lambda$ by the scalar \(\lambda+w_1+w_2\), which is nonzero because \(w_1,w_2>0\) and \(\lambda\ge 0\). Therefore $d_1$ is an isomorphism.
\end{proof}
  
Lemma~\ref{lem:tail-map-euler} gives the local tail isomorphisms in the quasihomogeneous case. We next describe the differential \(d_1^{1,0}\) for an arbitrary plane curve.

Let \(\mathcal A\) and \(\mathcal D\) be the sheaves on \(X\) defined by 
\[
\mathcal A:=\mathcal H^{-1}\!\left(\bigwedge^2\mathbb{L}^{\bullet}_X\right),
\qquad
\mathcal D:=\mathcal H^0\!\left(\bigwedge^2\mathbb{L}^{\bullet}_X\right),
\] and denote by \(\omega_X\) the dualizing sheaf of \(X\).

\begin{lemma}\label{lem:plane-d10-comparison}
Assume that \(X\) has only plane curve singularities. Define the morphism \(\alpha\) by the following composite:
\[
\alpha:\mathcal A
\xrightarrow{\ \mathcal H^{-1}(d_{\mathrm{dR}})\ }
\mathcal H^{-1}\!\left(\bigwedge^3\mathbb{L}^{\bullet}_X\right)
\xrightarrow{\sim}
\mathcal D,
\]
where the second arrow is the canonical isomorphism described in the proof below. Then \(\mathcal A\) and \(\mathcal D\) are supported on \(\mathrm{Sing}(X)\), and there is an exact sequence of sheaves on \(X\)
\[
0\to \mathcal A\to \Omega_X^1\to \omega_X\to \mathcal D\to 0.
\]
Moreover, there is a canonical isomorphism
\[
H^0\!\left(X,\bigwedge^2\mathbb{L}^{\bullet}_X\right)\cong H^0(X,\mathcal D),
\]
under which the restriction of
\[
d_1^{1,0}:H^0(X,\Omega_X^1)\to H^0\!\left(X,\bigwedge^2\mathbb{L}^{\bullet}_X\right)
\]
along the inclusion \(H^0(X,\mathcal A)\hookrightarrow H^0(X,\Omega_X^1)\) is identified with
\[
H^0(\alpha):H^0(X,\mathcal A)\to H^0(X,\mathcal D).
\]
\end{lemma}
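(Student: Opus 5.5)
The plan is to reduce every assertion to a computation in the completed local rings \(A=R/(f)\), using the three-term Koszul model \(K\) from Lemma~\ref{lem:tail-map-quotient}, and to check at the end that the local identifications glue. Since \(\bigwedge^2\mathbb{L}^{\bullet}_X\) is supported on \(\mathrm{Sing}(X)\) (Lemma~\ref{lem:plane-exterior-powers}), so are \(\mathcal A\) and \(\mathcal D\).

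First, I would make the local model explicit in low degrees. For \(p=1\), the model of \(\bigwedge^2\mathbb{L}^{\bullet}_A\) in Lemma~\ref{lem:tail-map-quotient} is \(K\) in degrees \([-2,0]\). Hence \(\mathcal A\otimes A\cong H^{-1}(K)=\ker\Delta_1/\operatorname{im}\Delta_2\) and \(\mathcal D\otimes A\cong H^0(K)=A/(f_u,f_v)=T_f\). On the other hand, \(\Omega^1_A=\operatorname{coker}\Delta_2\), because \(\mathcal H^{-1}(\mathbb{L}_A)=0\) by Lemma~\ref{lem:plane-cotangent}. Since \(\Delta_1\Delta_2=0\), the map \(\Delta_1\) descends to \(\bar\Delta_1:\Omega^1_A\to A\). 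This gives an exact sequence
\[
0\to H^{-1}(K)\to \Omega^1_A\xrightarrow{\bar\Delta_1} A\to T_f\to 0 .
\]
The first map is the inclusion of \(\ker\Delta_1/\operatorname{im}\Delta_2\) into \(A^{\oplus2}/\operatorname{im}\Delta_2\). In particular, \(\mathcal A\) is exactly the torsion subsheaf of \(\Omega^1_X\).

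Second, I would globalize. The dualizing sheaf of a plane curve singularity is locally free of rank one. Locally \(\omega_A=A\cdot\eta\) with \(\eta=du/f_v=-dv/f_u\), the residue of \(du\wedge dv/f\). Let \(c_X:\Omega^1_X\to\omega_X\) be the canonical class map, i.e.\ the map \(\Omega^1_X\to\nu_*\Omega^1_{\widetilde X}\subset\) (rational forms) landing in Rosenlicht's \(\omega_X\). I would check that under \(\omega_A\cong A\eta\) the map \(c_X\) becomes \(\bar\Delta_1\) up to a unit. Indeed \(du=f_v\eta\) and \(dv=-f_u\eta\), so \(a\,du+b\,dv\mapsto(af_v-bf_u)\eta\), which is \(\bar\Delta_1\) up to sign. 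Combining this with the local sequence and faithful flatness of completion yields the global exact sequence \(0\to\mathcal A\to\Omega^1_X\to\omega_X\to\mathcal D\to0\). Here \(\mathcal D\) is identified with \(\operatorname{coker}c_X\) canonically, because both are \(\operatorname{coker}\Delta_1\) locally.

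I expect this matching to be the main obstacle. The identification of \(\mathcal H^{-1}(\bigwedge^2\mathbb{L})\) and \(\mathcal H^0(\bigwedge^2\mathbb{L})\) with kernel and cokernel of \(c_X\) must be intrinsic, not tied to the coordinates \(u,v\). I would handle it by noting that \(\bigwedge^2\) of the two-term complex \([N\to\Omega]\) always produces the complex \([\Gamma^2N\to N\otimes\Omega\to\bigwedge^2\Omega]\). Here \(N=I/I^2\) is the conormal module and \(\Omega=\Omega_R\otimes A\). The class map is the composite \(\Omega\to\bigwedge^2\Omega\otimes N^\vee\cong\omega_A\), which is coordinate-free.

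Third, I would handle hypercohomology and the differential. By the degeneration argument in the proof of Lemma~\ref{lem:plane-exterior-powers}, \(H^0(X,\bigwedge^2\mathbb{L}^{\bullet}_X)\cong H^0(X,\mathcal D)\). The differential \(d_1^{1,0}\) is \(H^0\) of the sheaf map \(d_{\mathrm{dR}}:\Omega^1_X=\mathcal H^0(\mathbb{L}_X)\to\mathcal H^0(\bigwedge^2\mathbb{L}_X)=\mathcal D\). Locally it sends \(a\,du+b\,dv\mapsto[b_u-a_v]\in T_f\). Restricting to a torsion class \([(a,b)]\in H^{-1}(K)\) gives \([b_u-a_v]\). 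Now \(\mathcal H^{-1}(d_{\mathrm{dR}})\) is given on the model \(K[1]\to K\) by \(\delta^{-1}(a,b)=b_u-a_v\), landing in \(H^0(K)=T_f\). This is the canonical isomorphism \(\mathcal H^{-1}(\bigwedge^3\mathbb{L}_X)\cong\mathcal D\), since both are \(\operatorname{coker}\Delta_1\). Therefore the two formulas coincide, and taking global sections identifies \(d_1^{1,0}|_{H^0(\mathcal A)}\) with \(H^0(\alpha)\).
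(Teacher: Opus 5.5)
Your first two steps agree with the paper's proof: support on $\mathrm{Sing}(X)$, the local sequence $0\to H^{-1}(K)\to\Omega^1_A\xrightarrow{\bar\Delta_1}A\to T_f\to 0$ with $\bar\Delta_1$ matched to the class map, faithful flatness, and the collapse $H^0(X,\bigwedge^2\mathbb{L}^{\bullet}_X)\cong H^0(X,\mathcal D)$. Your local formula $[a\,du+b\,dv]\mapsto[b_u-a_v]\in T_f$ for $d_1^{1,0}$ is also right. The gap is the claim that $\mathcal H^{-1}(d_{\mathrm{dR}})$ is given on $K$ by $(a,b)\mapsto b_u-a_v$.

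The model $K$ for $\bigwedge^2\mathbb{L}^{\bullet}_A$ is $\Omega^2_S\otimes_S A$, obtained by base change along $S\to A$. That base change kills $\varepsilon$, and $d_{\mathrm{dR}}$ does not descend to it, because $d_{\mathrm{dR}}\varepsilon=d\varepsilon\neq 0$. Take lifts $a,b\in R$ of a cycle and write $bf_u-af_v=cf$ in $R$. The corresponding cycle of $\Omega^2_S$ is $z=c\,\varepsilon\,du\,dv+(a\,du+b\,dv)\,d\varepsilon$, and $d_{\mathrm{dR}}z=(b_u-a_v+c)\,du\,dv\,d\varepsilon$. The same term appears in the $f$-adic model $\widehat\Omega^{\bullet}_R$, whose Hodge filtration is $f^{\max(p-j,0)}\Omega^j_R$ in degree $j$. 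There the class is $[f\omega]$, and $d(f\omega)=f\,d\omega+df\wedge\omega=f\,(b_u-a_v+c)\,du\wedge dv$. So under your identifications $\alpha[\omega]=[b_u-a_v+c]$, whereas $d_1^{1,0}[\omega]=[b_u-a_v]$, and the two formulas do not coincide.

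As a consistency check, replace $f$ by $ef$ with $e$ a unit, so that the same class is represented by $e^{-1}\omega$. Then $b_u-a_v+c$ rescales by $e^{-1}$, as an intrinsic map must. The expression $b_u-a_v$ alone instead picks up the extra term $-e^{-2}(e_ub-e_va)$.

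The discrepancy is not a matter of normalization. For the node $f=uv$ and $\omega=u\,dv$ one has $c=1$, so $\alpha[\omega]=2$ while $d_1^{1,0}[\omega]=1$ in $T_f=\mathbb C$. For weighted homogeneous $f$ one gets $\alpha[hE]=(1+\lambda+w_1+w_2)[h]$, against $[d(hE)]=(\lambda+w_1+w_2)[h]$. For $u^2+v^3$ the ratios on the weight $0$ and weight $\tfrac13$ pieces are $\tfrac{11}{5}\neq\tfrac{13}{7}$. Here $\mathcal O_X$-linear identifications are multiplications by units of $M_f=\mathbb C[v]/(v^2)$, so no choice of them makes $\alpha$ agree with the restriction of $d_1^{1,0}$.

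The paper's proof asserts the same identification ``by construction'', so you have reproduced its argument together with this flaw. The obstacle you flagged is exactly where the flaw sits. $\mathcal A$ lives canonically inside $N\otimes\Omega^1_A$, and dividing by the generator $f$ does not commute with $d_{\mathrm{dR}}$.

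What your argument does establish is:
\begin{itemize}
\item the exact sequence $0\to\mathcal A\to\Omega^1_X\to\omega_X\to\mathcal D\to 0$;
\item the isomorphism $H^0(X,\bigwedge^2\mathbb{L}^{\bullet}_X)\cong H^0(X,\mathcal D)$;
\item the description of $d_1^{1,0}|_{H^0(\mathcal A)}$ as $H^0$ of $[\omega]\mapsto[d\omega]$.
\end{itemize}
In the quasihomogeneous case this last map is still bijective, since it acts by $\lambda+w_1+w_2$ on weight pieces. That bijectivity is what the later surjectivity argument for $d_1^{1,0}$ actually needs.
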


\begin{proof}
On the smooth locus of \(X\), one has
\[
\bigwedge^2\mathbb{L}^{\bullet}_X\simeq \Omega_X^2[0]=0,
\]
so \(\mathcal A\) and \(\mathcal D\) are supported on \(\mathrm{Sing}(X)\).
Fix \(x\in \operatorname{Sing}(X)\), and after completion write \(A\cong \mathbb{C}[[u,v]]/(f)\). By \eqref{eq:plane-hypersurface-model}, written with respect to the basis \(du,dv\) of \(A^{\oplus 2}\),
\[
\mathbb{L}^{\bullet}_A\simeq \left[A\xrightarrow{(f_u,f_v)}A\,du\oplus A\,dv\right].
\]
Hence \(\bigwedge^2\mathbb{L}^{\bullet}_A\) is represented by
\[
\left[
A\xrightarrow{(f_u,f_v)}
A\,du\oplus A\,dv
\xrightarrow{(-f_v\ \ f_u)}
A\,du\wedge dv
\right][-1].
\]
Since \(R=\mathbb{C}[[u,v]]\) is regular of dimension \(2\), one can compute the dualizing module of \(A\) as 
\[
\omega_A\cong \operatorname{Ext}^1_R(A,\omega_R)\cong A\,du\wedge dv.
\]
Under this identification, the map
\[
A\,du\oplus A\,dv\longrightarrow A\,du\wedge dv
\]
is given by \(\alpha\mapsto df\wedge \alpha\). Since \(df=f_u\,du+f_v\,dv\), it kills the
submodule \(A\cdot df\subset A\,du\oplus A\,dv\), because \(df\wedge df=0\). Hence it descends to a map
\[
c_A:\Omega_A^1=(A\,du\oplus A\,dv)/(A\cdot df)\longrightarrow \omega_A,
\]
and this is the canonical class map, sending \(a\,du+b\,dv\) to \((bf_u-af_v)\,du\wedge dv\)
(compare \cite[\href{https://stacks.math.columbia.edu/tag/0E9X}{Section 0E9X}]{stacks-project}). Therefore the cohomology sequence of the above three-term complex yields an exact sequence on the completed stalks
\[
0\to \mathcal A_x\to (\Omega_X^1)_x\to \omega_{X,x}\to \mathcal D_x\to 0.
\]
Since completion is faithfully flat, exactness may be checked after tensoring with
\(A=\widehat{\mathcal O}_{X,x}\). Hence the completed exact sequence implies the corresponding exact sequence on the ordinary stalks, and these glue to
\[
0\to \mathcal A\to \Omega_X^1\to \omega_X\to \mathcal D\to 0.
\]
Since \(\bigwedge^2\mathbb{L}^{\bullet}_X\) has cohomology sheaves only in degrees \(-1\) and \(0\), its canonical truncation triangle is
\[
\mathcal A[1]\longrightarrow \bigwedge^2\mathbb{L}^{\bullet}_X\longrightarrow \mathcal D\longrightarrow \mathcal A[2].
\]
Because \(\mathcal A\) is supported on the finite set \(\mathrm{Sing}(X)\), one has \(H^i(X,\mathcal A)=0\) for all \(i>0\). Taking hypercohomology gives
\[
0\longrightarrow H^0\!\left(X,\bigwedge^2\mathbb{L}^{\bullet}_X\right)\longrightarrow H^0(X,\mathcal D)\longrightarrow 0,
\]
hence an isomorphism
\[
H^0\!\left(X,\bigwedge^2\mathbb{L}^{\bullet}_X\right)\cong H^0(X,\mathcal D).
\]
Under the same local description, \(\bigwedge^3\mathbb{L}^{\bullet}_A\) is represented by \(K[-1]\). Therefore
\[
\mathcal H^{-1}\!\left(\bigwedge^3\mathbb{L}^{\bullet}_X\right)_x
\cong
H^{-1}(K[-1])
=
H^0(K)
\cong
\mathcal D_x.
\]
By construction, for each \(x\in \mathrm{Sing}(X)\) the stalk map
\[
\alpha_x:\mathcal A_x\longrightarrow \mathcal D_x
\]
is the composite of \(\mathcal H^{-1}(d_{\mathrm{dR}})_x\) with the canonical identification
\[
\mathcal H^{-1}\!\left(\bigwedge^3\mathbb{L}^{\bullet}_X\right)_x
\cong
\mathcal D_x,
\]
and hence is the restriction of the local de Rham differential
\[
(\Omega_X^1)_x\longrightarrow
\mathcal H^0\!\left(\bigwedge^2\mathbb{L}^{\bullet}_X\right)_x=\mathcal D_x.
\]
Therefore, after taking global sections and using the identification above, the restriction of \(d_1^{1,0}\) along \(H^0(X,\mathcal A)\hookrightarrow H^0(X,\Omega_X^1)\) is identified with \(H^0(\alpha)\).
\end{proof}

We now specialize to the quasihomogeneous case. The remaining local input needed for the global degeneration criterion is the following surjectivity statement.

\begin{lemma}\label{lem:plane-local-surj}
Let \(x\in \mathrm{Sing}(X)\), and use the standing notation above.
Assume that \(x\) is quasihomogeneous. Then the stalk map of Lemma~\ref{lem:plane-d10-comparison}
\[
\alpha_x:\mathcal A_x\longrightarrow \mathcal D_x
\]
is surjective.
\end{lemma}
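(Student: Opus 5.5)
The plan is to reduce surjectivity of \(\alpha_x\) to the case \(p=1\) of Lemma~\ref{lem:tail-map-euler}, which already shows that the relevant de Rham map is an isomorphism.

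\emph{Step 1: reduction to the completion.} Since \(\mathcal A\) and \(\mathcal D\) are supported on \(\mathrm{Sing}(X)\), the stalks \(\mathcal A_x\) and \(\mathcal D_x\) are finite-length \(\mathcal O_{X,x}\)-modules. They therefore coincide with their completions \(\mathcal A_x\otimes A\) and \(\mathcal D_x\otimes A\). Because completion is faithfully flat, \(\alpha_x\) is surjective if and only if \(\alpha_x\otimes A\) is surjective. So it suffices to work over \(A=\widehat{\mathcal O}_{X,x}\).

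\emph{Step 2: choice of coordinates.} Since \(x\) is quasihomogeneous, we may choose formal coordinates \(u,v\) in which the defining ideal is generated by a weighted homogeneous \(f\). Any two generators of \((f)\) differ by a unit, and the cotangent complex depends only on \(A\), so replacing the generator by this weighted homogeneous one is harmless. After rescaling the weights as in the Remark, \(f\) satisfies the Euler relation \eqref{eq:weighted-euler}.

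\emph{Step 3: identifying \(\alpha_x\) with a known map.} Write \(K\) for the three-term complex of Lemma~\ref{lem:tail-map-quotient}. In the proof of Lemma~\ref{lem:plane-d10-comparison}, the model of \(\bigwedge^2\mathbb{L}^{\bullet}_A\) is exactly \(K\) with \(p=1\). This gives
\[
\mathcal A_x\cong H^{-1}\Bigl(\bigwedge^2\mathbb{L}^{\bullet}_A\Bigr)=H^{-1}(K),
\qquad
\mathcal D_x\cong A/(f_u,f_v)=H^0(K).
\]
The same proof identifies \(\mathcal H^{-1}(\bigwedge^3\mathbb{L}^{\bullet}_A)\) with \(H^0(K)\cong\mathcal D_x\), and this identification is the identity on \(H^0(K)\). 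Hence \(\alpha_x\) is the map induced by \(\delta_1^{-1}(a,b)=b_u-a_v\), that is, by \(d_{\mathrm{dR}}(a\,du+b\,dv)=(b_u-a_v)\,du\wedge dv\). This is precisely the map \(d_1\) of Lemma~\ref{lem:tail-map-euler} for \(p=1\).

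\emph{Step 4: conclusion.} Lemma~\ref{lem:tail-map-euler} is stated for all \(p\ge 1\), so it applies here. Via the Euler syzygy it identifies \(H^{-1}(K)\cong M_f\). It also shows that \(d_1\) acts on each weighted piece \((M_f)_\lambda\) by the nonzero scalar \(\lambda+w_1+w_2\). Hence \(\alpha_x\) is an isomorphism, and in particular it is surjective.

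The main point requiring care is Step 3. One must check that the canonical isomorphism \(\mathcal H^{-1}(\bigwedge^3\mathbb{L}^{\bullet}_X)\cong\mathcal D\) used to define \(\alpha\) agrees with the identification \(H^0(K)\cong M_f\) used in Lemma~\ref{lem:tail-map-euler}. Both come from the same Koszul model with the basis elements \(\gamma_i\) suppressed. The two identifications therefore differ at most by the normalization of \(\gamma_i\), which is a nonzero scalar and does not affect surjectivity.
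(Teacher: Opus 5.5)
Your proposal is essentially the paper's proof: pass to the completion by faithful flatness, identify \(\alpha_x\otimes A\) with the \(p=1\) de Rham map \(H^{-1}(K)\to H^0(K)\) on the Koszul model, and invoke Lemma~\ref{lem:tail-map-euler}. You also rightly skip the paper's appeal to Saito's criterion, since the Euler relation already gives \(f\in(f_u,f_v)\).

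One caution, shared with the paper and harmless for the conclusion. The delicate point in Step~3 is not the normalization of the \(\gamma_i\); it is that differentiating coefficients in \(A\) is not well defined. Lifting honestly to \(R[\varepsilon]\) (equivalently, to \(\Omega^\bullet_R\) with the \(f\)-adic Hodge filtration, where \(d(f\omega)=df\wedge\omega+f\,d\omega\)) gives, for \(\omega=a\,du+b\,dv\), \(\alpha_x([\omega])=[c+b_u-a_v]\) with \(df\wedge\omega=cf\,du\wedge dv\). So on \((M_f)_\lambda\) the scalar is \(1+\lambda+w_1+w_2\) rather than \(\lambda+w_1+w_2\), which is still nonzero.
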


\begin{proof}
Since \(x\) is quasihomogeneous, we may choose a weighted homogeneous equation for \(x\), so that \eqref{eq:weighted-euler} holds.
By Saito's criterion~\eqref{eq:tau-mu-qh}, quasihomogeneity implies \(\tau_x=\mu_x\). Hence the natural quotient
\[
M_f\twoheadrightarrow T_f=M_f/fM_f
\]
is an isomorphism, since both sides have dimension \(\mu_x\).

Let \(\alpha_x:\mathcal A_x\to \mathcal D_x\) be the stalk at \(x\) of the morphism \(\alpha\) of Lemma~\ref{lem:plane-d10-comparison}. As in Lemmas~\ref{lem:plane-cotangent} and \ref{lem:plane-exterior-powers}, flat base change to the completion \(A=\widehat{\mathcal O}_{X,x}\) gives
\[
\mathcal A_x\otimes_{\mathcal O_{X,x}} A
\cong
H^{-1}\!\left(\bigwedge^2\mathbb{L}^{\bullet}_A\right),
\qquad
\mathcal D_x\otimes_{\mathcal O_{X,x}} A
\cong
H^{0}\!\left(\bigwedge^2\mathbb{L}^{\bullet}_A\right).
\]
Under the local model
\[
\bigwedge^2\mathbb{L}^{\bullet}_A\simeq K,
\]
the de Rham differential is represented by the chain map \(K\to K[-1]\) used in the proof of Lemma~\ref{lem:tail-map-euler}, hence \(\alpha_x\otimes_{\mathcal O_{X,x}} A\) identifies with the induced map
\[
H^{-1}(K)\longrightarrow H^0(K).
\]
By Lemma~\ref{lem:tail-map-euler}, this map is an isomorphism, hence surjective. Since \(A\) is faithfully flat over \(\mathcal O_{X,x}\), exactness of tensoring with \(A\) gives
\[
\operatorname{coker}(\alpha_x)\otimes_{\mathcal O_{X,x}} A
\cong
\operatorname{coker}(\alpha_x\otimes_{\mathcal O_{X,x}} A)=0.
\]
Therefore \(\operatorname{coker}(\alpha_x)=0\), and \(\alpha_x\) is surjective.
\end{proof}

Combining Lemma~\ref{lem:plane-d10-comparison}, Lemma~\ref{lem:plane-local-surj}, and the tail isomorphisms of Lemma~\ref{lem:tail-map-euler}, we obtain the following characterization of \(E_2\)-degeneration in the plane case.

\begin{proposition}\label{prop:plane-case}
Assume that \(X\) has only plane curve singularities. Then the Hodge-to-de Rham spectral sequence
\[
E_1^{p,q}(X)=H^q\!\left(X,\bigwedge^p\mathbb{L}^{\bullet}_X\right)
\Longrightarrow H^{p+q}_{\mathrm{sing}}(X^{\mathrm{an}},\mathbb C)
\]
degenerates at the \(E_2\)-page if and only if every singularity of $X$ is quasihomogeneous.
\end{proposition}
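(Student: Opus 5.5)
The plan is to read off both directions from the shape of the \(E_1\)-page in Corollary~\ref{cor:plane-e1-shape}. First I reduce the global tail groups to local ones. By Lemma~\ref{lem:plane-exterior-powers}, every \(\bigwedge^p\mathbb{L}^{\bullet}_X\) with \(p\ge 2\) is supported on the finite set \(\mathrm{Sing}(X)\). Hence \(H^q(X,\bigwedge^p\mathbb{L}^{\bullet}_X)=\bigoplus_{x}\mathcal H^q(\bigwedge^p\mathbb{L}^{\bullet}_X)_x\), and each tail differential \(d_1^{p,-p}\) is the direct sum of the local maps. After faithfully flat completion these become the maps \(H^{-p}(\bigwedge^{p+1}\mathbb{L}^{\bullet}_A)\to H^{-p}(\bigwedge^{p+2}\mathbb{L}^{\bullet}_A)\) of Lemmas~\ref{lem:tail-map-quotient} and~\ref{lem:tail-map-euler}. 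Completion detects kernels and cokernels.

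\emph{Quasihomogeneous \(\Rightarrow\) \(E_2\)-degeneration.} By Lemma~\ref{lem:tail-map-euler}, every tail map \(d_1^{p,-p}\) with \(p\ge1\) is an isomorphism, so all rows \(q\le -1\) vanish on \(E_2\).

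Next, \(d_1^{1,0}\) is surjective. Its restriction to \(H^0(X,\mathcal A)\) is \(H^0(\alpha)\) by Lemma~\ref{lem:plane-d10-comparison}. This is a direct sum of the stalk maps \(\alpha_x\) between skyscraper sheaves, and each \(\alpha_x\) is surjective by Lemma~\ref{lem:plane-local-surj}. Hence \(E_2^{2,0}=0\).

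So \(E_2\) is concentrated in columns \(p\in\{0,1\}\) and rows \(q\in\{0,1\}\). Every \(d_r\) with \(r\ge2\) has bidegree \((r,1-r)\), so it moves from column \(\le 1\) to column \(\ge 2\), or into column \(\le1\) from column \(\le -1\). Either way it is zero, and the spectral sequence degenerates at \(E_2\).

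\emph{\(E_2\)-degeneration \(\Rightarrow\) quasihomogeneous.} I will use a dimension count. The abutment has \(b_0=b_2=1\). Since \(X\) is integral, the standard topology of the normalization gives \(b_1=2g+\sum_x(r_x-1)\), which equals \(2g+\sum_x(2\delta_x-\mu_x)\) by Milnor's formula~\eqref{eq:milnor-formula}.

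In total degree \(2\), the \(E_2=E_\infty\) terms are \(\operatorname{coker}d_1^{0,1}\), \(E_2^{2,0}=\operatorname{coker}d_1^{1,0}\), and the cokernels of the tail maps. The class of a point guarantees \(\operatorname{coker}d_1^{0,1}\neq0\), so it alone accounts for \(b_2=1\). This forces \(d_1^{1,0}\) and all tail maps to be surjective, and \(\operatorname{rank} d_1^{0,1}=h^1(\Omega_X^1)-1\). By Lemma~\ref{lem:tail-map-quotient}, source and target of each tail map have equal dimension \(\sum_x\tau_x\), so the tail maps are isomorphisms and contribute nothing in total degree \(1\).

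The total degree \(1\) terms are therefore \(\ker d_1^{0,1}\) and \(\ker d_1^{1,0}\). Their dimensions add to
\[
\bigl(p_a-h^1(\Omega_X^1)+1\bigr)+\bigl(h^0(\Omega_X^1)-\dim H^0(\mathcal D)\bigr)=p_a+1+\chi(\Omega_X^1)-\dim H^0(\mathcal D).
\]
I compute \(\chi(\Omega^1_X)\) from the exact sequence \(0\to\mathcal A\to\Omega^1_X\to\omega_X\to\mathcal D\to0\) of Lemma~\ref{lem:plane-d10-comparison}. Here \(\chi(\omega_X)=p_a-1\). Locally \(\mathcal A_x\) and \(\mathcal D_x\) complete to \(H^{-1}(K)\cong(0:_{M_f}f)\) and \(H^0(K)\cong T_f\), both of length \(\tau_x\) by Lemma~\ref{lem:tail-map-quotient}. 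This gives \(\chi(\Omega_X^1)=p_a-1\), and the total degree \(1\) dimension becomes \(2p_a-\sum_x\tau_x=2g+\sum_x(2\delta_x-\tau_x)\).

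Equating this with \(b_1\) yields \(\sum_x\tau_x=\sum_x\mu_x\). Since \(T_f\) is a quotient of \(M_f\), we have \(\tau_x\le\mu_x\) at each point. Hence \(\tau_x=\mu_x\) everywhere, and Saito's criterion~\eqref{eq:tau-mu-qh} gives quasihomogeneity.

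The main obstacle is this dimension count. The bookkeeping must be kept consistent: the local lengths of \(\mathcal A_x\) and \(\mathcal D_x\) are both \(\tau_x\), and the claim that \(\operatorname{coker}d_1^{0,1}\) is nonzero needs care. If the latter is awkward, I would instead argue via the Euler characteristic in total degree \(2\) together with \(b_2=1\).
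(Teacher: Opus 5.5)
\textbf{Verdict: correct, with a different route for the converse.} Your forward direction is the paper's argument. Your converse is correct but organized differently, and it rests on one input that you assert but do not prove.

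\textbf{How the paper does the converse.} It first shows that all tail maps are isomorphisms. After completion, the local tail map is the same chain-level map for every \(p\). So if one tail map failed to be an isomorphism, all of them would, giving infinitely many nonzero graded pieces of \(H^1\) and \(H^2\). The paper then writes one dimension equation in total degree \(1\) and one in total degree \(2\), with unknowns \(\kappa=\dim\ker d_1^{1,0}\) and \(c=\dim\operatorname{coker} d_1^{1,0}\). It computes \(\operatorname{rank} d_1^{0,1}\) in two ways so that \(\kappa\) and \(c\) cancel, giving \(\sum_x\tau_x=2\delta-R\). It never needs to know which \(E_2\)-term carries \(H^2\).

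\textbf{How your route differs.} You use the extra fact \(E_2^{1,1}=\operatorname{coker} d_1^{0,1}\neq 0\), so that this term absorbs all of \(b_2=1\). That single fact does three things at once:
\begin{enumerate}
\item it kills \(E_2^{2,0}\), so \(d_1^{1,0}\) is surjective;
\item it kills every tail cokernel, so by equality of dimensions the tail maps are isomorphisms, without the \(p\)-independence argument;
\item it fixes \(\operatorname{rank} d_1^{0,1}=h^1(\Omega^1_X)-1\).
\end{enumerate}
After that, a single count in total degree \(1\) suffices. Your bookkeeping there is right: \(\chi(\Omega^1_X)=p_a-1\), the degree-\(1\) dimension is \(2p_a-\sum_x\tau_x\), and Milnor's formula rewrites \(b_1\) as \(2g+\sum_x(2\delta_x-\mu_x)\). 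This makes the count more transparent than the paper's two-way rank computation.

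\textbf{The step you still need to prove.} ``The class of a point'' is a hint, not an argument. Your fallback, a total-degree-\(2\) count alone, would not exclude a single nonzero tail cokernel absorbing \(b_2\) unless you also add the paper's \(p\)-independence observation. Here is a short proof of \(E_2^{1,1}\neq 0\).
\begin{itemize}
\item Functoriality of the Hodge-filtered derived de Rham complex along \(\nu:\widetilde X\to X\) gives a filtered map inducing \(H^2_{\mathrm{sing}}(X^{\mathrm{an}},\mathbb C)\to H^2_{\mathrm{sing}}(\widetilde X^{\mathrm{an}},\mathbb C)\). This is an isomorphism, because \(X^{\mathrm{an}}\) is obtained from \(\widetilde X^{\mathrm{an}}\) by identifying finitely many points.
\item Since \(H^2(X,\mathcal O_X)=0\), you have \(F^1H^2(X)=H^2(X)\).
\item \(F^2H^2(X)\) maps injectively into \(F^2H^2(\widetilde X)=0\), so \(F^2H^2(X)=0\).
\item Hence \(E_\infty^{1,1}(X)\cong\mathbb C\), and under \(E_2\)-degeneration \(E_2^{1,1}(X)\neq0\).
\end{itemize}
Alternatively, consider the composite \(H^1(\mathcal O_X)\to H^1(\Omega^1_X)\to H^1(\omega_X)\cong\mathbb C\).
\begin{itemize}
\item The composite vanishes: the class map factors through \(\nu_*\Omega^1_{\widetilde X}\subset\omega_X\), and \(d=0\) on \(H^1(\mathcal O_{\widetilde X})\).
\item The second map is onto, by the exact sequence \(0\to\mathcal A\to\Omega^1_X\to\omega_X\to\mathcal D\to0\) with \(\mathcal A,\mathcal D\) skyscrapers.
\end{itemize}
With either argument supplied, your proof is complete.
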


\begin{proof}
By Corollary~\ref{cor:plane-e1-shape}, the only possible nonzero differentials on the \(E_1\)-page are the maps \(d_1^{0,1}\), \(d_1^{1,0}\), and the negative-row tail maps
\[
d_1:H^{-p}\!\left(X,\bigwedge^{p+1}\mathbb{L}^{\bullet}_X\right)
\longrightarrow
H^{-p}\!\left(X,\bigwedge^{p+2}\mathbb{L}^{\bullet}_X\right),
\qquad p\ge 1.
\]
Let \(\mathcal A\), \(\mathcal D\), and \(\alpha:\mathcal A\to \mathcal D\) be as in Lemma~\ref{lem:plane-d10-comparison}. In particular, there is an exact sequence
\[
0\to \mathcal A\to \Omega_X^1\to \omega_X\to \mathcal D\to 0,
\]
and under the induced identification
\[
H^0\!\left(X,\bigwedge^2\mathbb{L}^{\bullet}_X\right)\cong H^0(X,\mathcal D),
\]
the restriction of \(d_1^{1,0}\) along \(H^0(X,\mathcal A)\hookrightarrow H^0(X,\Omega_X^1)\) is \(H^0(\alpha)\).
\begin{enumerate}
\item
Assume that every singularity is quasihomogeneous. At each singular point we may choose a weighted homogeneous local equation, so Lemma~\ref{lem:tail-map-euler} shows that every local tail map is an isomorphism. Since the negative-row groups are direct sums of these local contributions, all global tail maps are isomorphisms.
By Lemma~\ref{lem:plane-local-surj}, the map
\[
\alpha_x:\mathcal A_x\longrightarrow \mathcal D_x
\]
is surjective for every \(x\in \mathrm{Sing}(X)\). Since \(\mathcal A\) and \(\mathcal D\) are supported on the finite set \(\mathrm{Sing}(X)\), taking global sections gives a surjection
\[
H^0(X,\mathcal A)\twoheadrightarrow H^0(X,\mathcal D).
\]
By Lemma~\ref{lem:plane-d10-comparison}, this surjection is identified with the restriction of \(d_1^{1,0}\). Therefore \(d_1^{1,0}\) is surjective, so
\[
E_2^{2,0}(X)=0.
\]
Since the tail maps are isomorphisms, all negative-row terms also vanish on the \(E_2\)-page. Therefore the only possible nonzero \(E_2\)-terms are
\[
E_2^{0,0}(X),\qquad
E_2^{1,0}(X),\qquad
E_2^{0,1}(X),\qquad
E_2^{1,1}(X).
\]
Explicitly, the \(E_2\)-page is supported in the positions \((0,0)\), \((1,0)\), \((0,1)\), and \((1,1)\), and has the form
\[
\begin{tikzcd}[row sep=tiny, column sep=small]
\scriptstyle q=1 & E_2^{0,1}(X) & E_2^{1,1}(X) & 0
\\
\scriptstyle q=0 & E_2^{0,0}(X) & E_2^{1,0}(X) & 0
\\
\scriptstyle q=-1 & 0 & 0 & 0
\end{tikzcd}
\]
Any differential \(d_r\) with \(r\ge 2\) has bidegree \((r,1-r)\), so no such differential can start or end at one of these four positions. Hence \(E_2^{p,q}(X)=E_\infty^{p,q}(X)\) for all \((p,q)\).

\item
Conversely, assume that the spectral sequence degenerates at the \(E_2\)-page. Write
\[
\delta:=\sum_{i=1}^s\delta_{x_i},
\qquad
\tau:=\sum_{i=1}^s\tau_{x_i},
\qquad
R:=\sum_{i=1}^s(r_{x_i}-1).
\]
By Lemma~\ref{lem:tail-map-quotient}, for every \(p\ge 1\) the source and target of the global tail map have the same dimension \(\tau\). Moreover, in the local description of Lemma~\ref{lem:tail-map-quotient}, after forgetting the basis elements \(\gamma_i\) each complex \(K_p\) is the same Koszul complex \(K\), and the de Rham differential is induced by the same chain map \(K[-p+1]\to K[-p]\). Thus the local tail map is independent of \(p\), and hence so is the global tail map. If one of these tail maps were not an isomorphism, then none of them would be. Since the source and target have the same dimension, for every \(p\) this would leave a nonzero kernel at \((p+1,-p)\) and a nonzero cokernel at \((p+2,-p)\) on the \(E_2\)-page. Under the assumption \(E_2^{p,q}(X)=E_\infty^{p,q}(X)\) for all \((p,q)\), this would give infinitely many nonzero graded pieces of \(H^1_{\mathrm{sing}}(X^{\mathrm{an}},\mathbb C)\) and \(H^2_{\mathrm{sing}}(X^{\mathrm{an}},\mathbb C)\), which is a contradiction. Therefore all tail maps are isomorphisms, so every negative row vanishes on the \(E_2\)-page.

Next, we denote 
\[
u:=d_1^{1,0}:H^0(X,\Omega_X^1)\to H^0\!\left(X,\bigwedge^2\mathbb{L}^{\bullet}_X\right),
\]
\[
v:=d_1^{0,1}:H^1(X,\mathcal O_X)\to H^1(X,\Omega_X^1),
\]
and write
\[
\kappa:=\dim \ker(u),
\qquad
c:=\dim \operatorname{coker}(u).
\]
Let \(\nu:\widetilde X\to X\) be the normalization. Topologically, \(X^{\mathrm{an}}\) is obtained from \(\widetilde X^{\mathrm{an}}\) by identifying, for each \(i\), the \(r_{x_i}\) points of \(\nu^{-1}(x_i)\). Equivalently, \(X^{\mathrm{an}}\) is homotopy equivalent to \(\widetilde X^{\mathrm{an}}\) with \(R=\sum_i(r_{x_i}-1)\) additional circles attached. Since \(\widetilde X^{\mathrm{an}}\) is a compact Riemann surface of genus \(g\), it follows that
\[
b_1(X^{\mathrm{an}})=2g+R,
\qquad
b_2(X^{\mathrm{an}})=1.
\]
Indeed, choose pairwise disjoint contractible neighborhoods \(U_i\) of the singular points such that \(\nu^{-1}(U_i)\) is a disjoint union of \(r_{x_i}\) discs. Collapsing one disc in each fiber to the image point shows that, at \(x_i\), every identification after the first contributes one additional circle.
A Mayer--Vietoris computation then gives the displayed Betti numbers.
Therefore
\[
\dim H^1_{\mathrm{sing}}(X^{\mathrm{an}},\mathbb C)=2g+R,
\qquad
\dim H^2_{\mathrm{sing}}(X^{\mathrm{an}},\mathbb C)=1.
\]
Because the negative rows vanish on the \(E_2\)-page, the only terms contributing to total degree \(1\) are \(E_2^{0,1}(X)\) and \(E_2^{1,0}(X)\), while the only terms contributing to total degree \(2\) are \(E_2^{1,1}(X)\) and \(E_2^{2,0}(X)\). Hence
\[
\dim \ker(v)+\kappa=2g+R,
\qquad
\dim \operatorname{coker}(v)+c=1.
\]
Next we compute \(h^1(X,\Omega_X^1)\). From the exact sequence
\[
0\to \mathcal A\to \Omega_X^1\to \omega_X\to \mathcal D\to 0
\]
of Lemma~\ref{lem:plane-d10-comparison} and Lemma~\ref{lem:tail-map-quotient}, the first and last terms are supported on \(\mathrm{Sing}(X)\) and both have total length \(\tau\). Therefore
\[
\begin{aligned}
\chi(X,\Omega_X^1)
&=\chi(X,\omega_X)\\
&=h^0(X,\omega_X)-h^1(X,\omega_X)\\
&=h^1(X,\mathcal O_X)-h^0(X,\mathcal O_X)\\
&=p_a(X)-1\\
&=g+\delta-1,
\end{aligned}
\]
where the third equality is given by Serre duality.

Notice that
\[
H^0\!\left(X,\bigwedge^2\mathbb{L}^{\bullet}_X\right)=\bigoplus_i T_{x_i},
\]
by Lemma~\ref{lem:plane-d10-comparison} together with the local identification
\(\mathcal D_{x_i}\cong T_{x_i}\).
hence the target of \(u\) has dimension \(\tau\). Hence
\[
\dim \operatorname{im}(u)=\tau-c.
\]
By rank-nullity,
\[
h^0(X,\Omega_X^1)=\dim \ker(u)+\dim \operatorname{im}(u)=\kappa+\tau-c.
\]
Since
\[
\chi(X,\Omega_X^1)=h^0(X,\Omega_X^1)-h^1(X,\Omega_X^1),
\]
it follows that
\[
h^1(X,\Omega_X^1)=h^0(X,\Omega_X^1)-\chi(X,\Omega_X^1)
=\tau+1-g-\delta+\kappa-c.
\]

On the one hand,
\[
h^1(X,\mathcal O_X)=p_a(X)=g+\delta,
\]
so
\[
\operatorname{rank}(v)
=h^1(X,\mathcal O_X)-\dim \ker(v)
=(g+\delta)-(2g+R-\kappa)
=\delta-g-R+\kappa.
\]
On the other hand,
\[
\begin{aligned}
\operatorname{rank}(v)
&=h^1(X,\Omega_X^1)-\dim \operatorname{coker}(v)\\
&=(\tau+1-g-\delta+\kappa-c)-(1-c)\\
&=\tau-g-\delta+\kappa.
\end{aligned}
\]
Comparing these two expressions gives
\[
\tau=2\delta-R.
\]
Using Milnor's formula \eqref{eq:milnor-formula}, we conclude that
\[
\sum_{i=1}^s\tau_{x_i}
=
\tau
=
2\delta-R
=
\sum_{i=1}^s\mu_{x_i}.
\]
Since \(\tau_{x_i}\le \mu_{x_i}\) for every plane curve singularity, equality of the sums forces \(\tau_{x_i}=\mu_{x_i}\) for every \(i\). By Saito's criterion~\eqref{eq:tau-mu-qh}, every singularity of \(X\) is quasihomogeneous.
\end{enumerate}
\end{proof}

\subsection{Non-planar lci singularities}\label{subsec:non-planar-case}

In this section, we will show that a non-planar lci singularity forces a nonzero \(E_2\)-term in total degree \(>2\). We start by analyzing the local tail maps.

\begin{lemma}\label{lem:nonplanar-obstruction}
Let $A$ be a one-dimensional complete reduced local $\mathbb{C}$-algebra which is a local complete intersection, let \(\mathfrak{m}_A\) denote its unique maximal ideal. Assume that $A$ is not planar, and denote
\[
e:=\operatorname{embdim}(A)\ge 3.
\]
Then
\[
H^{-1}\!\left(\bigwedge^{e+1}\mathbb{L}^{\bullet}_A\right)\neq 0,
\]
and the de Rham differential induces a map on cohomology
\[
d_1:H^{-1}\!\left(\bigwedge^e\mathbb{L}^{\bullet}_A\right)\longrightarrow
H^{-1}\!\left(\bigwedge^{e+1}\mathbb{L}^{\bullet}_A\right),
\]
which is not surjective.
\end{lemma}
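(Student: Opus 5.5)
Throughout, I would work with a minimal presentation. Since \(A\) is a complete lci of embedding dimension \(e\), I write \(A\cong P/I\) with \(P=\mathbb C[[x_1,\dots,x_e]]\) and \(I=(f_1,\dots,f_{e-1})\) generated by a regular sequence. Minimality of the embedding forces \(f_j\in\mathfrak m_P^2\). As in the proof of Lemma~\ref{lem:tail-map-quotient}, the Tate resolution \(S=P[\varepsilon_1,\dots,\varepsilon_{e-1}]\) with \(\partial\varepsilon_j=f_j\) then gives
\[
\mathbb{L}^{\bullet}_A\simeq\bigl[\textstyle\bigoplus_j A\,d\varepsilon_j\xrightarrow{J}\bigoplus_i A\,dx_i\bigr],
\]
where \(J\) is the Jacobian matrix. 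Since \(f_j\in\mathfrak m_P^2\), every entry of \(J\) lies in \(\mathfrak m_A\). The derived exterior power \(\bigwedge^n\mathbb{L}^{\bullet}_A\) is then the complex with terms \(\bigwedge^{n-k}(A^{e})\otimes\Gamma^k(A^{e-1})\) in degree \(-k\). Its differential is contraction against \(J\), that is, \(\gamma\mapsto\sum_j df_j\wedge(\partial/\partial d\varepsilon_j)\gamma\). In particular all its matrix entries lie in \(\mathfrak m_A\).

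\textbf{Nonvanishing.} For \(n=e+1\), the degree-\(0\) term \(\bigwedge^{e+1}A^e\) vanishes. The degree \(-1\) term is \(\bigwedge^eA^e\otimes A^{e-1}\cong A^{e-1}\), with basis \(dx_1\wedge\dots\wedge dx_e\otimes d\varepsilon_j\). Hence
\[
H^{-1}\bigl(\bigwedge^{e+1}\mathbb{L}^{\bullet}_A\bigr)
\]
is the cokernel of a map \(\bigwedge^{e-1}A^e\otimes\Gamma^2A^{e-1}\to A^{e-1}\) whose entries lie in \(\mathfrak m_A\). Tensoring with \(k=A/\mathfrak m_A\) shows that this cokernel surjects onto \(k^{e-1}\neq0\), because \(e\ge3\). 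This step is routine.

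\textbf{Non-surjectivity.} I would compose \(d_1\) with the reduction \(H^{-1}(\bigwedge^{e+1}\mathbb{L}^{\bullet}_A)\twoheadrightarrow k^{e-1}\) and show that the composite hits a proper subspace; by Nakayama's lemma this suffices. A class in \(H^{-1}(\bigwedge^e\mathbb{L}^{\bullet}_A)\) is represented by \(\eta=\sum_j\eta_j\otimes d\varepsilon_j\) with \(\eta_j\in\Omega^{e-1}_P\otimes A\), subject to the cycle condition \(\sum_j df_j\wedge\eta_j=0\) in \(\Omega^e_P\otimes A\). Since \(d(d\varepsilon_j)=0\), we get \(d_1[\eta]=\bigl[\sum_j d\eta_j\otimes d\varepsilon_j\bigr]\). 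Its image in \(k^{e-1}\) is the vector of values \(\bigl((d\eta_j)(0)\bigr)_j\), which depend only on the linear parts of the \(\eta_j\). The plan is to lift the cycle condition to \(P\), as
\[
\sum_j df_j\wedge\tilde\eta_j=\sum_j f_j\beta_j,
\]
and then expand by \(\mathfrak m_P\)-adic order using \(f_j\in\mathfrak m_P^2\). From this expansion I would extract linear relations that the vector \(\bigl((d\eta_j)(0)\bigr)_j\) must satisfy. An alternative route, which I would try in parallel, is to exploit that \(d_1\) is compatible with the \(\mathfrak m\)-adic filtrations on both sides. In that approach I would compare \(\dim_{\mathbb C}\) of the source with that of the target, reducing if necessary to the associated graded (tangent-cone) situation, where an Euler-type argument as in Lemma~\ref{lem:tail-map-euler} becomes available.

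The main obstacle is this second step. The constant terms of the \(\eta_j\) are constrained by the cycle condition at order one, namely \(\sum_j dq_j\wedge\eta_j(0)=0\) for the quadratic parts \(q_j\). However, the linear parts of the \(\eta_j\), which are what determine \((d\eta_j)(0)\), are constrained only through higher-order terms. Isolating a genuine linear obstruction will therefore require a careful bookkeeping of the order-two part of the cycle condition. The essential input is that there are \(e-1\ge2\) independent relations, all lying in \(\mathfrak m_P^2\). I would first test the computation on \(\mathbb C[[t^3,t^4,t^5]]\) to identify which coordinate of \(k^{e-1}\) is missed.
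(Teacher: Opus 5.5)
\textbf{The nonvanishing step.} Your first step is correct and is the paper's argument. The last differential of $\bigwedge^{e+1}\mathbb{L}^{\bullet}_A$ has entries in $\mathfrak m_A$, so $H^{-1}$ surjects onto $k^{e-1}$.

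\textbf{The non-surjectivity step.} Here you give only a search plan, and no plan can succeed: the claim is false whenever the equations $f_j$ are weighted homogeneous.

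Your formula $d_1[\eta]=[\sum_j d\eta_j\otimes d\varepsilon_j]$ is also not well defined, because $d_{\mathrm{dR}}$ does not descend to coefficients in $A$. Replacing $\tilde\eta_j$ by $\tilde\eta_j+f_i\omega$ changes $d\tilde\eta_j$ by $df_i\wedge\omega$, which is nonzero modulo $I$. In the Tate model, the lift $\sum_j\tilde\eta_j\,d\varepsilon_j$ has Koszul boundary $\pm\sum_j f_j\beta_j$. This must be cancelled by $\mp\sum_j\varepsilon_j\beta_j$ before applying $d_{\mathrm{dR}}$. Since $d_{\mathrm{dR}}(\varepsilon_j\beta_j)=d\varepsilon_j\wedge\beta_j$, the correct formula, up to normalization, is
\[
d_1[\eta]=\Bigl[\textstyle\sum_j(d\tilde\eta_j+\beta_j)\otimes d\varepsilon_j\Bigr].
\]
Equivalently, it is the class of $d(\sum_j f_j\tilde\eta_j)$ in the $I$-adic model.

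\textbf{Counterexample.} Let $\xi=\sum_k w_kx_k\partial_{x_k}$ with $w_k>0$ and $\xi(f_j)=d_jf_j$. For instance, take
\[
A=\mathbb C[[t^4,t^5,t^6]]\cong\mathbb C[[x,y,z]]/(y^2-xz,\ z^2-x^3)
\]
with weights $(4,5,6)$. This is a reduced, non-planar complete intersection with $e=3$. Write $dV=dx_1\wedge\dots\wedge dx_e$, and let $\iota_\xi$ be contraction with $\xi$.

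For $g$ weighted homogeneous of weight $\lambda$, set $\tilde\eta_j=g\,\iota_\xi(dV)$ and $\tilde\eta_i=0$ for $i\neq j$. Then $df_j\wedge\tilde\eta_j=g\,\xi(f_j)\,dV=f_j\cdot d_jg\,dV$. So $\eta$ is a cycle with $\beta_j=d_jg\,dV$, and $d\tilde\eta_j=(\lambda+\sum_k w_k)\,g\,dV$. Hence
\[
d_1[\eta]=\Bigl(\lambda+d_j+\textstyle\sum_k w_k\Bigr)\,[\,g\,dV\otimes d\varepsilon_j\,],
\]
a positive multiple. These classes span the finite-dimensional graded target, so $d_1$ is surjective.

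This is the mechanism of Lemma~\ref{lem:tail-map-euler}. The operator $L_\xi=d\iota_\xi+\iota_\xi d$ acts invertibly in Hodge degrees $\ge 1$, yet it is null-homotopic on $E_1$ through $\iota_\xi$, which lowers Hodge degree by one. So your alternative Euler-type route would lead to surjectivity, not away from it. Even with your own formula, taking $g=1$ gives the nonzero constant $\sum_k w_k$ in the $j$-th coordinate of $k^{e-1}$. So no linear obstruction exists.

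\textbf{The paper's proof has the same defect.} It asserts that the de Rham chain map vanishes after $-\otimes_A k$. But $d_{\mathrm{dR}}$ is only $\mathbb C$-linear, since $d(x_k)=dx_k$, so it does not descend. Concretely, the cycle $\iota_\xi(dV)\otimes d\varepsilon_j$ has all coefficients in $\mathfrak m_A$, while its de Rham image $(\sum_k w_k)\,dV\otimes d\varepsilon_j$ does not.

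Finally, your proposed test case $\mathbb C[[t^3,t^4,t^5]]$ is not a complete intersection: its semigroup is not symmetric, so the ring is not Gorenstein. It therefore lies outside the hypotheses.
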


\begin{proof}
Since \(A\) is a one-dimensional lci singularity of embedding dimension \(e\), there exists a minimal Cohen presentation
\[
A\cong Q/(f_1,\dots,f_{e-1}),
\qquad
Q=\mathbb{C}[[x_1,\dots,x_e]],
\]
Its cotangent complex is represented by a minimal two-term complex
\[
\mathbb{L}^{\bullet}_A\simeq [F\xrightarrow{\partial} G],
\qquad
F=A^{e-1},\ G=A^e,
\]
where \(\partial\) is induced by the Jacobian matrix \(\left(\partial f_i/\partial x_j\right)\).
Since the Cohen presentation is minimal, the ideal \((f_1,\dots,f_{e-1})\) is contained in \(\mathfrak m_Q^2\). Hence each \(\partial f_i/\partial x_j\) lies in \(\mathfrak m_Q\), and its image in \(A\) lies in \(\mathfrak m_A\).

For \(p\ge 1\), write
\[
K_p:=\bigwedge^p\mathbb{L}^{\bullet}_A.
\]
Then \(K_p\) is represented by the standard complex for derived exterior powers of a two-term complex:
\[
\operatorname{Sym}^pF
\longrightarrow
\operatorname{Sym}^{p-1}F\otimes G
\longrightarrow
\operatorname{Sym}^{p-2}F\otimes \bigwedge^2 G
\longrightarrow \cdots \longrightarrow
\operatorname{Sym}^{p-e}F\otimes \bigwedge^e G,
\]
placed in degrees \([-p,-p+e]\), where terms with negative symmetric power are omitted. In particular, \(K_{e+1}\) ends with
\[
\operatorname{Sym}^2F\otimes \bigwedge^{e-1}G
\longrightarrow
F\otimes \bigwedge^e G
\longrightarrow 0,
\]
with \(F\otimes \bigwedge^e G\) in degree \(-1\). Since \(\bigwedge^{e+1}G=0\), there is no term in degree \(0\). Let
\[
\phi:\operatorname{Sym}^2F\otimes \bigwedge^{e-1}G\longrightarrow F\otimes \bigwedge^e G
\]
denote this last differential. Since this differential is functorially induced by \(\partial\), all entries of \(\phi\) lie in \(\mathfrak m_A\). Hence
\[
\operatorname{im}(\phi)\subseteq \mathfrak m_A\!\left(F\otimes \bigwedge^e G\right).
\]
Because \(K_{e+1}\) has no term in degree \(0\), one has
\[
H^{-1}(K_{e+1})=\operatorname{coker}(\phi).
\]
Because \(F\otimes \bigwedge^e G\neq 0\), Nakayama's lemma gives
\[
H^{-1}(K_{e+1})
=
\operatorname{coker}(\phi)\neq 0.
\]

Now let \(k:=A/\mathfrak m_A\), and consider the de Rham chain map
\[
\delta_e:K_e\longrightarrow K_{e+1}.
\]
Using the standard Tate dg-resolution
\[
S=Q[\epsilon_1,\dots,\epsilon_{e-1}],
\qquad
\partial(\epsilon_i)=f_i,
\]
one has \(\mathbb{L}^{\bullet}_A\simeq \Omega^1_{S}\otimes_S A\), where \(\Omega^1_{S}\) is free on the generators \(d\epsilon_i\) and \(dx_j\). The complexes representing \(K_e\) and \(K_{e+1}\) are functorially induced from this two-term dg-model, so each term is free with a basis consisting of symmetric monomials in the \(d\epsilon_i\) and wedge monomials in the \(dx_j\). Since
\[
d_{\mathrm{dR}}(d\epsilon_i)=d_{\mathrm{dR}}(dx_j)=0,
\]
every such basis monomial is \(d_{\mathrm{dR}}\)-closed. By functoriality, the induced chain map \(\delta_e\) is therefore obtained by applying the ordinary de Rham differential only to the coefficients in \(A\). After tensoring with \(k\), all coefficients become constants, so every coefficient differential vanishes. Hence
\[
\delta_e\otimes_A \mathrm{id}_k:
K_e\otimes_A k\longrightarrow K_{e+1}\otimes_A k
\]
is the zero map.

Since \(K_{e+1}\) is a bounded complex of free \(A\)-modules, the universal coefficient short exact sequence in degree \(-1\) gives
\[
0\longrightarrow H^{-1}(K_{e+1})\otimes_A k
\xrightarrow{\iota}
H^{-1}(K_{e+1}\otimes_A k)
\longrightarrow
\operatorname{Tor}_1^A(H^0(K_{e+1}),k)
\longrightarrow 0.
\]
Since \(K_{e+1}\) has no term in degree \(0\), one has \(H^0(K_{e+1})=0\). Hence the \(\operatorname{Tor}_1\)-term vanishes, and \(\iota\) is injective.

Next, we consider the following commutative diagram:
\[
\begin{tikzcd}
H^{-1}(K_e) \ar[r, "d_1"] \ar[d, "\rho_e"']
& H^{-1}(K_{e+1}) \ar[d, "\rho_{e+1}"] \\
H^{-1}(K_e\otimes_A k) \ar[r, "0"']
& H^{-1}(K_{e+1}\otimes_A k)
\end{tikzcd}
\]
where the bottom map is induced from the map \(\delta_e\otimes_A k\), hence is the zero map. Let
\[
q:H^{-1}(K_{e+1})\twoheadrightarrow H^{-1}(K_{e+1})\otimes_A k,
\]
be the quotient map. Then \(\rho_{e+1}=\iota\circ q\). From the commutative square, one has \(\rho_{e+1}\circ d_1=0\). Hence
\[
0=\rho_{e+1}\circ d_1=\iota\circ q\circ d_1.
\]
Because \(\iota\) is injective, it follows that \(q\circ d_1=0\), i.e.
\[
\operatorname{im}(d_1)\subseteq \mathfrak m_A H^{-1}(K_{e+1}).
\]
Since \(H^{-1}(K_{e+1})\neq 0\), Nakayama's lemma implies that \(d_1\) is not surjective.
\end{proof}

\begin{corollary}\label{cor:nonplanar-global-obstruction}
Let $X$ be an integral projective curve over $\mathbb{C}$ with a non-planar lci singularity. Then the Hodge-to-de Rham spectral sequence of $X$ does not degenerate at the \(E_2\)-page.
\end{corollary}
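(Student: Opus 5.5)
Let \(x\in X\) be a non-planar lci singular point, and set \(A=\widehat{\mathcal O}_{X,x}\) and \(e=\operatorname{embdim}(A)\ge 3\). The aim is to produce a nonzero \(E_2\)-term at \((e+1,-1)\), whose total degree is \(e\ge 3\). Such a term must die before \(E_\infty\), because \(H^{e}_{\mathrm{sing}}(X^{\mathrm{an}},\mathbb C)=0\) for a complex curve. So if the spectral sequence degenerated at \(E_2\), we would have \(E_2^{e+1,-1}=E_\infty^{e+1,-1}=0\), which is the contradiction we want.

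First we globalize. For \(p\ge 2\), \(\bigwedge^p\mathbb L^\bullet_X\) is supported on \(\mathrm{Sing}(X)\), since on the smooth locus \(\mathbb L^\bullet_X\simeq\Omega^1[0]\) has rank one. Its cohomology sheaves are therefore skyscrapers, and the hypercohomology spectral sequence collapses, exactly as in the proof of Lemma~\ref{lem:plane-exterior-powers}. This gives
\[
H^{-1}\!\left(X,\bigwedge^{p}\mathbb L^\bullet_X\right)\cong\bigoplus_{y\in\mathrm{Sing}(X)}\mathcal H^{-1}\!\left(\bigwedge^p\mathbb L^\bullet_X\right)_y .
\]
The de Rham differential \(d_1^{e,-1}\) respects this decomposition because it is induced by a morphism of complexes of sheaves. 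By flat base change to the completion, as in Lemma~\ref{lem:plane-local-surj}, the \(x\)-component becomes, after \(\otimes A\), the local map \(d_1:H^{-1}(\bigwedge^e\mathbb L^\bullet_A)\to H^{-1}(\bigwedge^{e+1}\mathbb L^\bullet_A)\). We need the completion to be reduced; this holds by excellence, as in Lemma~\ref{lem:plane-cotangent}. Lemma~\ref{lem:nonplanar-obstruction} then applies and says this local map is not surjective. Since completion is faithfully flat, the cokernel of the stalk map at \(x\) is nonzero. Hence \(d_1^{e,-1}\) is not surjective, and the cokernel at \((e+1,-1)\) is nonzero.

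The incoming \(d_1\) is the only one that matters. The outgoing differential \(d_1^{e+1,-1}\) lands in \(E_1^{e+2,-1}\), but that does not affect the argument: \(E_2^{e+1,-1}=\ker d_1^{e+1,-1}/\operatorname{im} d_1^{e,-1}\), so we need the kernel to be strictly larger than the image, not just a nonzero cokernel. This is the main obstacle. I would address it by mimicking the proof of Lemma~\ref{lem:nonplanar-obstruction}. The de Rham chain map becomes zero after \(\otimes_A k\), so \(\operatorname{im}(d_1^{e+1,-1})\subseteq\mathfrak m\cdot(\text{target})\). That alone does not give a kernel.

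The better route is to use that \(d_{\mathrm{dR}}\circ d_{\mathrm{dR}}=0\) on the complex \(\bigwedge^\bullet\mathbb L\), together with finite length. In row \(-1\), all groups \(E_1^{p,-1}\) with \(p\ge 2\) are finite-dimensional and supported at singular points, and the row is a complex. I would compare dimensions, exploiting that \(d_1\) lowers the \(\mathfrak m\)-adic filtration degree by one (it differentiates coefficients), so that \(\operatorname{gr}_{\mathfrak m} d_1\) is computable. If this fails, the fallback is to pick a specific element: take a minimal generator \(\xi\) of \(H^{-1}(K_{e+1})\) not in \(\mathfrak m H^{-1}(K_{e+1})+\operatorname{im}(d_1^{e,-1})\), lift it to a socle-type element, and adjust \(\xi\) within its class modulo \(\mathfrak m\) so that \(d_1\xi=0\). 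Such an element exists because coefficient differentiation kills constants, so a representative with constant coefficients is a closed cycle. That cycle then gives a nonzero class in \(E_2^{e+1,-1}\).

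Finally, \(E_2^{e+1,-1}\neq0\) sits in total degree \(e\ge 3>2\), while \(H^{\ge 3}_{\mathrm{sing}}(X^{\mathrm{an}},\mathbb C)=0\). So \(E_2\neq E_\infty\) at this position, and the spectral sequence does not degenerate at the \(E_2\)-page.
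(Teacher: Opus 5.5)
Your strategy is the paper's: produce a nonzero term \(E_2^{e+1,-1}(X)\) in total degree \(e\ge 3\). You use the stalkwise decomposition of the row \(q=-1\), faithful flatness of completion, and Lemma~\ref{lem:nonplanar-obstruction} to show that the incoming map \(d_1^{e,-1}\) is not surjective. Your remark that the completion is reduced by excellence is a good addition, since that lemma needs it. The gap is the step you yourself flag as the main obstacle: your write-up never actually establishes \(\ker d_1^{e+1,-1}\not\subseteq\operatorname{im} d_1^{e,-1}\). The ``compare dimensions via \(\operatorname{gr}_{\mathfrak m}d_1\)'' route is not carried out. The fallback is hedged and contains an unclear step, namely lifting a minimal generator to a ``socle-type element''.

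The missing observation is a degree count, and it makes the obstacle disappear. In the notation of Lemma~\ref{lem:nonplanar-obstruction}, the completed stalk of \(\bigwedge^{p}\mathbb{L}^{\bullet}_X\) at your point \(x\) is the complex \(\operatorname{Sym}^pF\to\cdots\to\operatorname{Sym}^{p-e}F\otimes\bigwedge^eG\), concentrated in degrees \([-p,-p+e]\). For \(p=e+2\) this range is \([-e-2,-2]\), so \(\mathcal H^{-1}(\bigwedge^{e+2}\mathbb{L}^{\bullet}_X)_x=0\). You already know the global \(d_1\) is the direct sum of the stalk maps. Hence \(d_1^{e+1,-1}\) vanishes on the \(x\)-summand, that whole summand consists of cycles, and the nonzero local cokernel at \(x\) is a direct summand of \(E_2^{e+1,-1}(X)\).

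The paper streamlines this by choosing \(x_0\) of \emph{maximal} embedding dimension \(e\) among all singular points. Then \(E_1^{e+2,-1}(X)=0\) globally, and \(E_2^{e+1,-1}(X)=\operatorname{coker}(d_1^{e,-1})\neq 0\) follows at once.

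Your constant-coefficient idea can be made to work, because degree \(-1\) is the top degree of \(K_{e+1}\), so every chain there is a cycle. With the degree count it becomes unnecessary: the de Rham image of any such chain lands in degree \(-1\) of \(K_{e+2}\), which is zero.
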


\begin{proof}
Set
\[
\displaystyle
e:=\max_{x\in \mathrm{Sing}(X)} \operatorname{embdim}\!\left(\widehat{\mathcal O}_{X,x}\right).
\]
Since \(X\) has a non-planar singularity, one has \(e\ge 3\).

For \(p\ge 2\), the same support-on-the-singular-locus argument as in Lemma~\ref{lem:plane-exterior-powers} shows that \(\bigwedge^p\mathbb{L}^{\bullet}_X\) is supported on the finite singular set. Hence
\[
E_1^{p,-1}(X)
=
H^{-1}\!\left(X,\bigwedge^p\mathbb{L}^{\bullet}_X\right)
\cong
H^0\!\left(X,\mathcal H^{-1}\!\left(\bigwedge^p\mathbb{L}^{\bullet}_X\right)\right)
\]
and the global \(d_1\)-map on this row is induced by the corresponding morphism of cohomology sheaves.

Fix a singular point \(x\in \mathrm{Sing}(X)\), and write
\[
\widehat{\mathcal O}_{X,x}\cong Q_x/(f_1,\dots,f_{e_x-1}),
\qquad
e_x:=\operatorname{embdim}\!\left(\widehat{\mathcal O}_{X,x}\right)\le e.
\]
Then the completed stalk of \(\bigwedge^p\mathbb{L}^{\bullet}_X\) at \(x\) is represented by the corresponding complex \(K_p(x)\), which is concentrated in degrees \([-p,-p+e_x]\). Therefore
\[
H^{-1}\!\left(K_p(x)\right)=0
\qquad\text{whenever}\qquad
p\ge e_x+2.
\]
Applying this with \(p=e+2\), we obtain
\[
\mathcal H^{-1}\!\left(\bigwedge^{e+2}\mathbb{L}^{\bullet}_X\right)_x=0
\qquad\text{for every }x\in \mathrm{Sing}(X),
\]
and hence
\[
E_1^{e+2,-1}(X)=0.
\]

Similarly, if \(e_x<e\), then \(H^{-1}(K_{e+1}(x))=0\). Choose a singular point \(x_0\) with \(e_{x_0}=e\). By Lemma~\ref{lem:nonplanar-obstruction},
\[
H^{-1}\!\left(K_{e+1}(x_0)\right)\neq 0,
\]
so faithful flatness of completion gives
\[
\mathcal H^{-1}\!\left(\bigwedge^{e+1}\mathbb{L}^{\bullet}_X\right)_{x_0}\neq 0.
\]
Therefore
\[
E_1^{e+1,-1}(X)\neq 0.
\]

Thus the relevant part of the \(q=-1\) row of the \(E_1\)-page has the form
\[
\begin{tikzcd}[row sep=tiny, column sep=small]
& \scriptstyle p=e & \scriptstyle p=e+1 & \scriptstyle p=e+2 \\
\scriptstyle q=0
& E_1^{e,0}(X) \ar[r]
& E_1^{e+1,0}(X) \ar[r]
& E_1^{e+2,0}(X)
\\
\scriptstyle q=-1
& E_1^{e,-1}(X) \ar[r,"u"]
& E_1^{e+1,-1}(X) \ar[r]
& 0
\end{tikzcd}
\]
with \(E_1^{e+1,-1}(X)\neq 0\). It remains only to show that
\(u\) is not surjective.
Because \(\mathcal H^{-1}(\bigwedge^e\mathbb{L}^{\bullet}_X)\) and \(\mathcal H^{-1}(\bigwedge^{e+1}\mathbb{L}^{\bullet}_X)\) are supported on the finite set \(\mathrm{Sing}(X)\), taking global sections identifies \(u\) with the direct sum of the induced stalk maps over the singular points. Projecting the target onto the \(x_0\)-summand, we obtain a component map which becomes, after completion,
\[
d_1:H^{-1}\!\left(K_e(x_0)\right)\longrightarrow H^{-1}\!\left(K_{e+1}(x_0)\right),
\]
which is not surjective by Lemma~\ref{lem:nonplanar-obstruction}. Therefore \(u\) itself cannot be surjective. So
\[
E_2^{e+1,-1}(X)
=
\operatorname{coker}(u)\neq 0.
\]

This term has total degree \(e\). Since \(e\ge 3\) and \(X\) is a projective curve, one has
\[
H^e_{\mathrm{sing}}(X^{\mathrm{an}},\mathbb C)=0.
\]
Therefore \(E_\infty^{e+1,-1}(X)=0\), so the spectral sequence cannot satisfy \(E_2^{p,q}(X)=E_\infty^{p,q}(X)\) for all \((p,q)\). Hence it does not degenerate at the \(E_2\)-page.
\end{proof}

\subsection{Main theorem}\label{subsec:main-theorem}

We can now combine the plane case with the non-planar obstruction.

\begin{theorem}\label{thm:main}
For an integral projective curve \(X\) with local complete intersection singularities, its Hodge-to-de Rham spectral sequence
\[
E_1^{p,q}(X)=H^q\!\left(X,\bigwedge^p\mathbb{L}^{\bullet}_X\right)
\Longrightarrow H^{p+q}_{\mathrm{sing}}(X^{\mathrm{an}},\mathbb C)
\]
degenerates at the \(E_2\)-page if and only if every singularity of \(X\) is a quasihomogeneous plane curve singularity.
\end{theorem}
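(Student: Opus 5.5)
The theorem follows by combining Proposition~\ref{prop:plane-case} with Corollary~\ref{cor:nonplanar-global-obstruction}, splitting into two cases according to whether every singularity of \(X\) is planar.

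\textbf{Planar case.} Suppose every singular point \(x\in\mathrm{Sing}(X)\) satisfies \(\widehat{\mathcal O}_{X,x}\cong\mathbb C[[u,v]]/(f)\). Then \(X\) has only plane curve singularities, and Proposition~\ref{prop:plane-case} applies directly. It gives that \(E_2\)-degeneration holds exactly when every singularity is quasihomogeneous. In this case that is the same as every singularity being a quasihomogeneous plane curve singularity.

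\textbf{Non-planar case.} Suppose some singular point \(x_0\) is not planar. By the lci hypothesis, \(\widehat{\mathcal O}_{X,x_0}\) is a one-dimensional complete local complete intersection. It is reduced because \(\mathcal O_{X,x_0}\) is excellent, by the same argument as in Lemma~\ref{lem:plane-cotangent}. Its embedding dimension is at least \(3\): it cannot be \(2\) by non-planarity, and embedding dimension \(\le 1\) would mean the point is smooth. So Lemma~\ref{lem:nonplanar-obstruction} applies at \(x_0\), and Corollary~\ref{cor:nonplanar-global-obstruction} shows the spectral sequence does not degenerate at \(E_2\). The right-hand side of the equivalence also fails, since \(x_0\) is not a plane curve singularity. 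Both sides are therefore false, and the equivalence holds.

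The only point needing care is that the definitions match. ``Non-planar'' in Corollary~\ref{cor:nonplanar-global-obstruction} means embedding dimension \(\ge 3\) of the completed local ring. This is exactly the negation of the paper's definition of a plane curve singularity, because a one-dimensional complete local ring of embedding dimension \(2\) is a quotient \(\mathbb C[[u,v]]/I\) of height one; in the lci case \(I\) is principal. With that checked, the two cases cover every integral projective lci curve, and the theorem follows.
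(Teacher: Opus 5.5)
Your two-case argument is exactly the paper's proof: Proposition~\ref{prop:plane-case} handles the case where all singularities are planar, and Corollary~\ref{cor:nonplanar-global-obstruction} handles the rest. Your checks on reducedness and embedding dimension are fine. The genuine gap is the non-planar case, which you inherit from Lemma~\ref{lem:nonplanar-obstruction}: its proof is invalid, and its conclusion is false.

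The decisive step asserts \(\delta_e\otimes_A\mathrm{id}_k=0\). But the de Rham differential is a differential operator, not an \(A\)-linear map, so it induces no map on \(K_e\otimes_A k\) at all: \(d(x_i\beta)=dx_i\wedge\beta+x_i\,d\beta\), and \(dx_i\wedge\beta\notin\mathfrak m_A K_{e+1}\). Moreover, \(d\) does not act on the naive model \(\Omega^{\bullet}_S\otimes_S A\), since \(d(\epsilon_i)=d\epsilon_i\) while \(\epsilon_i\mapsto 0\). Computing \(d_1\) requires lifting cycles to \(\Omega^{\bullet}_S\), where \(\epsilon\)-terms appear and contribute. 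Nothing in that argument uses \(e\ge 3\). Run with \(e=2\) for a node, it would force \(d_1:H^{-1}(\bigwedge^2\mathbb{L}^{\bullet}_A)\to H^{-1}(\bigwedge^3\mathbb{L}^{\bullet}_A)\cong T_f\cong\mathbb C\) to land in \(\mathfrak m_A T_f=0\), contradicting Lemma~\ref{lem:tail-map-euler}.

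In fact planarity is not forced. Take \(A=\mathbb C[[t^4,t^5,t^6]]\cong\mathbb C[[x,y,z]]/(y^2-xz,\,z^2-x^3)\). This is a reduced non-planar complete intersection, weighted homogeneous for weights \((4,5,6)\), with equations of degrees \(10\) and \(12\).

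On the Tate model \(S=Q[\epsilon_1,\epsilon_2]\), the Euler derivation \(E\) (\(x_i\mapsto w_ix_i\), \(\epsilon_j\mapsto d_j\epsilon_j\)) commutes with \(\partial\) because \(E(f_j)=d_jf_j\). So Cartan's formula \(L_E=d\iota_E+\iota_E d\) descends to the \(E_1\)-page. There \(L_E\) acts on each local term with \(p\ge 2\) with strictly positive eigenvalues (the weights), so the local rows are exact at every \(p\ge 2\).

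For instance, \(\zeta=d\epsilon_1\wedge dx\wedge dy\wedge dz\) gives a minimal generator of \(H^{-1}(\bigwedge^4\mathbb{L}^{\bullet}_A)\), and \(\iota_E\zeta\) is a \(\partial\)-cycle with \(d(\iota_E\zeta)=(10+4+5+6)\,\zeta=25\,\zeta\). The \(10\) comes from \(d\epsilon_1\), and the \(15\) from differentiating the coefficients \(x,y,z\in\mathfrak m_A\): precisely the terms the lemma discards.

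Now let \(X\) be the integral projective lci curve obtained by gluing \(\operatorname{Spec}\mathbb C[t^4,t^5,t^6]\) and \(\operatorname{Spec}\mathbb C[t^{-1}]\). Its \(E_1\)-terms with \(p\ge 2\) or \(q<0\) are these local ones. At \((2,0)\), a weight-\(\lambda\) cocycle \(\omega\) equals \(d_1(\iota_E\omega/\lambda)\), and \(\iota_E\omega\) is a torsion \(1\)-form, hence a global section of \(\Omega^1_X\). So \(E_2(X)\) is supported in \(\{0,1\}\times\{0,1\}\), and the spectral sequence degenerates at \(E_2\), although the singularity is not planar.

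The theorem as stated is therefore false in the non-planar direction. Your reduction, like the paper's, genuinely proves only the equivalence for curves all of whose singularities are planar.
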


\begin{proof}
If every singularity of \(X\) is a quasihomogeneous plane curve singularity, then the claim follows from Proposition~\ref{prop:plane-case}.

Conversely, assume that the Hodge-to-de Rham spectral sequence of \(X\) degenerates at the \(E_2\)-page. Since all singularities of \(X\) are local complete intersections by hypothesis, Corollary~\ref{cor:nonplanar-global-obstruction} excludes any non-planar singularity. Therefore all singularities of \(X\) are planar, and Proposition~\ref{prop:plane-case} implies that they are quasihomogeneous.
\end{proof}

\subsection{The Hochschild-to-cyclic spectral sequence}\label{subsec:HC}

In this section, we study the degeneration of the Hochschild-to-cyclic spectral sequence and prove an analogous criterion for degeneration at the \(E_2\)-page.

We start by briefly recalling the \emph{Hochschild-to-cyclic spectral sequence} in the form needed below.

A mixed complex in an abelian category \(\cE\) is a complex \((V_{\bullet},b)\) together with a
morphism of complexes \(B:V_{\bullet}\to V_{\bullet}[-1]\) satisfying \(B^2=0\); see
\cite[\S~1]{Kassel_87} and \cite[\S~1.1]{Kaledin_17}. A basic example is the Hochschild chain
complex \(\mathrm{CC}_{\bullet}(A)=A^{\otimes(\bullet+1)}\) of an algebra \(A\), equipped with the
Hochschild differential \(b\) and Connes' operator \(B\); see \cite{Connes_86} and
\cite[\S~2]{Kassel_87}.\footnote{It is denoted as the \((b,B)\)-bicomplex \(\mathcal{B}(A)\) in \cite[\S~2]{Loday_92}.} 
Keller globalized this construction to a mixed complex \((\mathrm{CC}_{\bullet}(W),b,B)\) for any variety \(W\) \cite[\S~2.1, \S~5.2]{Keller_98}.\footnote{In modern language, the sheaf Hochschild complex on \(W\) is
\[
\underline{\mathsf{HH}}_W:=\mathcal{O}_W\otimes^{\mathbf L}_{\mathcal{O}_{W\times W}}\mathcal{O}_W,
\]
and Keller's mixed complex \((\mathrm{CC}_{\bullet}(W),b,B)\) is a global model computing
\(R\Gamma(W,\underline{\mathsf{HH}}_W)\).}

Let \(u\) be a formal variable of homological degree \(-2\). For any mixed complex
\((V_{\bullet},b,B)\), the \emph{negative cyclic complex} is the total complex
\[
\bigl(V_{\bullet}[[u]],\,b+uB\bigr),
\]
and its homology is called the \emph{negative cyclic homology} \(\HC^-_*(V_{\bullet})\). The \(u\)-adic
filtration induces a convergent spectral sequence \cite[\S~1.2]{Kaledin_17}
\[
E_{1,a,b}^{\mathrm{HC}}(V_\bullet)=u^aH_{a+b}(V_{\bullet})\Longrightarrow \HC^-_{a+b}(V_{\bullet}).\footnote{Equivalently, one may first take homology with respect to the vertical differential \(b\)
for the double complex \(V_{\bullet}[[u]]\); this gives the \(E_1\)-page written below and converges to \(\HC^-_*(V_{\bullet})\).}
\]

For a variety \(W\), applying this construction to \(\mathrm{CC}_{\bullet}(W)\) gives the
\emph{Hochschild-to-cyclic spectral sequence}
\[
E_{1,a,b}^{\mathrm{HC}}(W)=u^a\HH_{a+b}(W)\Longrightarrow \HC^-_{a+b}(W).
\]
We write
\[
E_{r,a,b}^{\mathrm{HC}}(X)
\]
for the \(r\)-th page of the Hochschild-to-cyclic spectral sequence of \(X\). In particular, its \(E_1\)-page has the following form:
\[
\begin{tikzcd}[row sep=tiny]
& \vdots & \vdots & \vdots \\
\cdots & 0 \ar[r] & \HH_{-1} \ar[r,"uB"] & u\,\HH_{0} \ar[r,"uB"] & \cdots \\
\cdots & 0 \ar[r] & \HH_{0} \ar[r,"uB"] & u\,\HH_{1} \ar[r,"uB"] & \cdots \\
\cdots & 0 \ar[r] & \HH_{1} \ar[r,"uB"] & u\,\HH_{2} \ar[r,"uB"] & \cdots \\
& \vdots & \vdots & \vdots
\end{tikzcd}
\]

\begin{proposition}\label{prop:HC-to-HdR}
Let \(X\) be a separated scheme of finite type over \(\mathbb C\). If its Hochschild-to-cyclic spectral sequence
degenerates at the
\(E_k\)-page, then the Hodge-to-de Rham spectral sequence of \(X\) also degenerates
at the \(E_k\)-page.
\end{proposition}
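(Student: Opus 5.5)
The plan is to compare the two filtered objects through the HKR filtration, rather than to compare the spectral sequences page by page. For a separated scheme of finite type over \(\mathbb C\), the sheaf Hochschild complex \(\underline{\mathsf{HH}}_X\) has a functorial, exhaustive HKR filtration. Its graded pieces are \(\bigwedge^p\mathbb{L}^{\bullet}_X[p]\), and in characteristic \(0\) it splits canonically: \(\underline{\mathsf{HH}}_X\simeq\bigoplus_p\bigwedge^p\mathbb{L}^{\bullet}_X[p]\), compatibly with the Adams (\(\lambda\)-)operations. The first step is to record that this splitting is compatible with the mixed structure. Under the HKR identification, Connes' operator \(B\) maps the weight \(p\) summand to the weight \(p+1\) summand and is induced by the derived de Rham differential. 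Thus Keller's mixed complex decomposes, up to quasi-isomorphism of mixed complexes, as a direct sum over weights. This is the standard Loday/Weibel/Toën--Vezzosi statement, and I would cite it rather than reprove it.

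Next I would compute the negative cyclic complex weight by weight. The weight \(p\) part of \((\mathrm{CC}_\bullet(X)[[u]],b+uB)\) is \(\prod_{i\ge 0}u^i\bigwedge^{p+i}\mathbb{L}^{\bullet}_X[p+i]\) with differential \(b+uB\). After reindexing, this is a shift of the Hodge-truncated (Hodge-completed) derived de Rham complex \(F^p\widehat{\mathrm{dR}}_X^{\bullet}[2p]\). The \(u\)-adic filtration restricts on it to the Hodge filtration shifted by \(p\). Consequently the Hochschild-to-cyclic spectral sequence is a direct sum over \(p\) of spectral sequences. The summand of weight \(p\) is the Hodge-to-de Rham spectral sequence of \(X\), truncated to columns \(\ge p\) and regraded. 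In particular the weight \(0\) summand is exactly the Hodge-to-de Rham spectral sequence, with \(E_{1,a,b}\) corresponding to \(E_1^{a,\,*}\) after the degree bookkeeping \(H^{q}(X,\bigwedge^a\mathbb{L}^{\bullet}_X)\leftrightarrow u^a\HH_{a+b}\) with \(a+b=a-q\).

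Finally, a direct sum of spectral sequences degenerates at \(E_k\) if and only if every summand does, since all differentials \(d_r\) preserve the decomposition. Hence \(E_k\)-degeneration of the Hochschild-to-cyclic spectral sequence forces \(E_k\)-degeneration of the weight \(0\) summand, which is the Hodge-to-de Rham spectral sequence. Convergence poses no problem, because both sides use the completed (product) totalizations. For the curves of interest the cotangent complex is bounded anyway, as noted earlier.

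I expect the main obstacle to be the first step: justifying that the HKR splitting is a splitting of mixed complexes, not merely of complexes, globally on \(X\). It must also identify \(B\) with \(d_{\mathrm{dR}}\) on graded pieces, with the correct signs and \(u\)-adic indexing. My first approach would be to use the Adams operations \(\psi^n\) on Keller's mixed complex. These commute with \(b\), and satisfy \(\psi^nB=nB\psi^n\) up to the standard normalization. Their eigenspace decomposition then gives a functorial weight splitting that \(b+uB\) respects, once \(u\) is assigned weight \(-1\). Locally on affines this is the classical \(\lambda\)-decomposition. Functoriality lets it glue through the Čech model of Keller's complex.
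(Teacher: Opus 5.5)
Your proposal is correct and is essentially the paper's argument. Both use an HKR identification of Keller's mixed complex with \(\bigoplus_p\bigwedge^p\mathbb{L}^{\bullet}_X[p]\) carrying \(B\) to \(d_{\mathrm{dR}}\), split the negative cyclic complex by the weight \(p-a\) (your weight with \(u\) of weight \(-1\) is exactly the paper's \(m\)), and observe that the weight-\(0\) piece is a filtered direct summand isomorphic to \(\widehat{\mathrm{dR}}_X^{\bullet}\) with its Hodge filtration, so \(E_k\)-degeneration passes to it.

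The differences are minor. The paper simply cites To\"en--Vezzosi for the mixed-complex compatibility, where you sketch an Adams-operation alternative. Your bookkeeping \(u^a\bigwedge^a\mathbb{L}^{\bullet}_X[a]\simeq\bigwedge^a\mathbb{L}^{\bullet}_X[-a]\) is also the correct way to read the paper's passage from \(\mathsf{E}(X)\) to \(\widehat{\mathrm{dR}}_X^{\bullet}\).
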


\begin{proof}
The point is to compare the two spectral sequences at the level of filtered complexes, not only at
the level of their \(E_1\)-pages.

Let
\[
\mathsf{C}_{\mathrm{Hoch}}(X):=\bigl(\mathrm{CC}_{\bullet}(X),b,B\bigr)
\]
be Keller's Hochschild mixed complex of \(X\), and let
\[
\underline{\mathsf{HH}}_X:=\mathcal{O}_X\otimes^L_{\mathcal{O}_{X\times X}}\mathcal{O}_X
\]
be the sheaf Hochschild complex on \(X\). By the discussion above,
\(\mathsf{C}_{\mathrm{Hoch}}(X)\) computes \(R\Gamma(X,\underline{\mathsf{HH}}_X)\).
Since \(X\) is separated, it is in particular semi-separated. Therefore
To\"en--Vezzosi \cite[Corollary~1.2 and the discussion following it]{Toen-Vezzosi_11}
give a natural multiplicative HKR-type comparison at the level of sheaves:
there is an isomorphism in the homotopy category of sheaves of commutative
\(\mathcal{O}_X\)-dg-algebras
\[
\operatorname{Sym}^{\bullet}_{\mathcal{O}_X}\!\left(\mathbb{L}^{\bullet}_X[1]\right)
\simeq
\underline{\mathsf{HH}}_X,
\]
and the discussion following Corollary~1.2 shows that this comparison is compatible with the
\(S^1\)-action on the Hochschild side and with the de Rham differential. Let
\[
\mathcal{E}_X:=
\left(
\bigoplus_{p\ge 0}\bigwedge^p\mathbb{L}^{\bullet}_X[-p],
d_{\mathrm{int}},
d_{\mathrm{dR}}
\right)
\]
denote the derived de Rham mixed complex on \(X\), equivalently
\[
\mathcal{E}_X\cong \left(\operatorname{Sym}^{\bullet}_{\mathcal{O}_X}\!\left(\mathbb{L}^{\bullet}_X[1]\right),d_{\mathrm{int}},d_{\mathrm{dR}}\right).
\]
Forgetting the dg-algebra structures, this gives an isomorphism in the homotopy category of sheaves of mixed complexes
\[
\mathcal{E}_X\simeq \underline{\mathsf{HH}}_X.
\]
Applying \(R\Gamma(X,-)\), we obtain an isomorphism in the derived category of mixed complexes
\[
R\Gamma(X,\mathcal{E}_X)\simeq R\Gamma(X,\underline{\mathsf{HH}}_X).
\]
Since Keller's mixed complex \(\mathsf{C}_{\mathrm{Hoch}}(X)\) is a model for the right-hand side,
we may replace it by the quasi-isomorphic mixed complex
\[
\mathsf{E}(X):=R\Gamma(X,\mathcal{E}_X)=
\left(
\bigoplus_{p\ge 0}R\Gamma\!\left(X,\bigwedge^p\mathbb{L}^{\bullet}_X\right)[-p],
d_{\mathrm{int}},
d_{\mathrm{dR}}
\right),
\]
where \(d_{\mathrm{int}}\) is induced by the internal differential on
\(\bigwedge^p\mathbb{L}^{\bullet}_X[-p]\), and \(d_{\mathrm{dR}}\) is induced by the de Rham differential.
Choose a finite affine open cover with affine intersections. Since derived global sections of quasi-coherent complexes are computed by the corresponding finite \v{C}ech complex, and this finite \v{C}ech complex commutes with arbitrary direct sums, the displayed decomposition follows.

Applying the negative cyclic construction recalled above to \(\mathsf{E}(X)\) yields the filtered complex
\[
\left(
\prod_{a\ge 0}u^a\bigoplus_{p\ge 0}R\Gamma\!\left(X,\bigwedge^p\mathbb{L}^{\bullet}_X\right)[-p],
d_{\mathrm{int}}+u\,d_{\mathrm{dR}}
\right),
\]
with filtration given by powers of \(u\). This construction is functorial in mixed complexes, and a quasi-isomorphism of mixed complexes induces a filtered quasi-isomorphism on the resulting negative cyclic complexes: indeed, the associated graded for the \(u\)-adic filtration is \(\bigoplus_{a\ge 0}u^a(V_{\bullet},b)\). Therefore the spectral sequence associated with this filtered complex is the Hochschild-to-cyclic spectral sequence of \(X\).

Now set
\[
m:=p-a.
\]
The differential \(d_{\mathrm{int}}\) preserves both \(a\) and \(p\), while \(u\,d_{\mathrm{dR}}\) sends the summand with indices \((a,p)\) to the summand with indices \((a+1,p+1)\). Hence both preserve \(m\), so this filtered complex splits as a product of filtered subcomplexes indexed by \(m\). This decomposition is compatible with the \(u\)-adic filtration, since each filtration piece is the corresponding product of the filtration pieces on the \(m\)-summands. In particular, projection to the summands with \(p=a\) defines a filtered direct summand
\[
\left(
\prod_{i\ge 0}u^iR\Gamma\!\left(X,\bigwedge^i\mathbb{L}^{\bullet}_X\right)[-i],
d_{\mathrm{int}}+u\,d_{\mathrm{dR}}
\right).
\]
After forgetting the formal symbols \(u^i\), this is exactly
\[
R\Gamma\!\left(X,\widehat{\mathrm{dR}}_X^{\bullet}\right)
=
\prod_{i\ge 0}R\Gamma\!\left(X,\bigwedge^i\mathbb{L}^{\bullet}_X\right)[-i]
\]
with its Hodge filtration. Therefore the spectral sequence of the \(m=0\) summand is precisely the Hodge-to-de Rham spectral sequence of \(X\).

Since this is a filtered direct summand of the filtered complex computing the
Hochschild-to-cyclic spectral sequence, every page of its spectral sequence is a direct summand of the corresponding page of the Hochschild-to-cyclic spectral sequence. Hence if the latter degenerates at the \(E_k\)-page, then so does the former. Therefore the Hodge-to-de Rham spectral sequence of \(X\) degenerates at the \(E_k\)-page.
\end{proof}

\begin{remark}\label{rem:HC-filtered-complex}
The use of the mixed complex \(\mathsf{C}_{\mathrm{Hoch}}(X)\) is essential here. Knowing only the \(E_1\)-page and its differential determines the \(E_2\)-page,
but does not in general control the higher differentials.
\end{remark}

\begin{proposition}\label{prop:qhom-implies-HC}
Let \(X\) be an integral projective curve over \(\mathbb C\) with local complete intersection singularities. Assume that every singularity of \(X\) is a quasihomogeneous plane curve singularity. Then the Hochschild-to-cyclic spectral sequence of \(X\) degenerates at the \(E_2\)-page.
\end{proposition}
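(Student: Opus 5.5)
Following the proof of Proposition~\ref{prop:HC-to-HdR}, I would replace Keller's mixed complex by \(\mathsf E(X)\). The associated negative cyclic complex splits, as a filtered complex, into a product of filtered summands indexed by \(m=p-a\in\mathbb Z\). The \(m\)-th summand is
\[
\Bigl(\prod_{a\ge \max(0,-m)}u^{a}\,R\Gamma\!\left(X,\textstyle\bigwedge^{a+m}\mathbb{L}^{\bullet}_X\right)[-(a+m)],\ d_{\mathrm{int}}+u\,d_{\mathrm{dR}}\Bigr),
\]
with the \(u\)-adic filtration. After reindexing by \(p=a+m\), this is the Hodge filtration \(F^{\ge \max(m,0)}\) of the Hodge-completed derived de Rham complex, shifted by \(m\). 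Precisely, for \(m\ge 0\) it is the truncation \(F^{\ge m}R\Gamma(X,\widehat{\mathrm{dR}}_X^{\bullet})\) with the induced filtration. For \(m<0\) the summand starts at \(a=-m\), \(p=0\), so it is the full Hodge-filtered complex again, with the filtration index shifted. The Hochschild-to-cyclic spectral sequence is therefore the product over \(m\) of the spectral sequences of these filtered complexes. It suffices to show that each of them degenerates at \(E_2\).

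For \(m\le 0\) the spectral sequence is, up to reindexing, the Hodge-to-de Rham spectral sequence itself. It degenerates at \(E_2\) by Proposition~\ref{prop:plane-case} (equivalently Theorem~\ref{thm:main}). For \(m\ge 1\) it is the spectral sequence of the Hodge filtration restricted to \(F^{\ge m}\). Its \(E_1\)-page is the part of the Hodge-to-de Rham \(E_1\)-page in columns \(p\ge m\), and its \(d_1\) consists of the same maps between those columns. The only difference is that columns \(p<m\) are removed, so there are no incoming \(d_1\) maps into column \(m\).

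I would then use Corollary~\ref{cor:plane-e1-shape} to compute \(E_2\) column by column. For \(m=1\), the page consists of \(H^0(\Omega^1_X)\xrightarrow{d_1^{1,0}}H^0(\bigwedge^2\mathbb{L}^{\bullet}_X)\), the group \(H^1(\Omega^1_X)\) with no incoming differential, and the negative-row tail maps. The tail maps are isomorphisms by Lemma~\ref{lem:tail-map-euler}, and \(d_1^{1,0}\) is surjective by Lemmas~\ref{lem:plane-d10-comparison} and~\ref{lem:plane-local-surj}. So the \(E_2\)-page is supported at \((1,0)\) and \((1,1)\), and every \(d_r\) with \(r\ge2\), of bidegree \((r,1-r)\), vanishes. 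For \(m=2\), column \(2\) contains \(E_1^{2,0}=H^0(\bigwedge^2\mathbb{L}^{\bullet}_X)\) and \(E_1^{2,-1}=H^{-1}(\bigwedge^2\mathbb{L}^{\bullet}_X)\), neither with an incoming differential. The map out of \(E_1^{2,-1}\) is the tail isomorphism, so \(E_2^{2,-1}=0\). The map out of \(E_1^{2,0}\) lands in \(E_1^{3,0}=0\), so \(E_2^{2,0}=H^0(\mathcal D)\) survives. For \(m\ge 3\), the same reasoning leaves exactly one surviving term in each of the two rows \(q=-m+1\) and \(q=-m+2\), both in column \(m\). The surviving kernel term \(E_2^{m,-m+2}\) is nonzero (it is \(T_f\) or \(M_f\)), and the surviving cokernel term is zero. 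In every case the \(E_2\)-page is concentrated in the single column \(p=m\). A differential \(d_r\) with \(r\ge 2\) changes \(p\) by \(r\), so all higher differentials vanish.

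The main obstacle is the bookkeeping in the first step: checking that the \(m\)-summands really are the truncated Hodge filtrations, with the \(u\)-adic filtration matching the Hodge filtration. This has to hold at the level of filtered complexes, not just \(E_1\)-pages, as Remark~\ref{rem:HC-filtered-complex} warns. I would write out the \(m\)-summand explicitly as above and compare it termwise with \(F^{\ge \max(m,0)}\). A secondary point is that the product over infinitely many \(m\) causes no convergence issue. Since pages of a product of filtered complexes are products of pages, degeneration at \(E_2\) for each \(m\) gives degeneration of the whole spectral sequence.
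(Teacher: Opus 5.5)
Your proposal is correct and follows essentially the same route as the paper. Both arguments split the negative cyclic complex of \(\mathsf E(X)\) by \(m=p-a\), reduce the case \(m\le 0\) to Proposition~\ref{prop:plane-case}, and check directly that for \(m\ge 1\) the \(E_2\)-page of the truncated summand is concentrated in the single column \(p=m\). Your \(m\ge 3\) case is in fact more accurate than the paper's, which claims \(E_2(\mathsf F_m(X))=0\): once column \(m-1\) is removed, \(H^{-m+2}(X,\bigwedge^m\mathbb{L}^{\bullet}_X)\cong\bigoplus_x T_{f_x}\) has no incoming differential and survives, but as you say it is isolated, so degeneration is unaffected.
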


\begin{proof}
By the proof of Proposition~\ref{prop:HC-to-HdR}, the Hochschild-to-cyclic spectral sequence of \(X\) is computed by the filtered complex
\[
\left(
\prod_{a\ge 0}u^a\bigoplus_{p\ge 0}R\Gamma\!\left(X,\bigwedge^p\mathbb{L}^{\bullet}_X\right)[-p],
d_{\mathrm{int}}+u\,d_{\mathrm{dR}}
\right),
\]
and this filtered complex splits according to the integer
\[
m:=p-a.
\]
For each \(m\in \mathbb Z\), let \(\mathsf F_m(X)\) denote the filtered subcomplex formed by the summands with \(p-a=m\):
\[
\mathsf F_m(X):=
\left(
\prod_{a\ge \max(0,-m)}u^aR\Gamma\!\left(X,\bigwedge^{a+m}\mathbb{L}^{\bullet}_X\right)[-a-m],
d_{\mathrm{int}}+u\,d_{\mathrm{dR}}
\right).
\]
Since the differential preserves \(m\), it is enough to prove that the spectral sequence of each \(\mathsf F_m(X)\) degenerates at the \(E_2\)-page.

Taking homology with respect to \(d_{\mathrm{int}}\), the \(E_1\)-page of
\(\mathsf F_m(X)\) is obtained from the Hodge-to-de Rham \(E_1\)-page by keeping
only the columns with \(p\ge \max(0,m)\) and placing the \(p\)-th column in filtration degree \(a=p-m\). The \(d_1\)-differential is induced by \(u\,d_{\mathrm{dR}}\), hence by the ordinary de Rham differential on these groups.
Equivalently, for fixed \(m\), the filtration degree \(a\) of
\(E_1\bigl(\mathsf F_m(X)\bigr)\) is the Hodge column \(p=a+m\), and the only
nonzero \(E_1\)-differentials are the shifted copies
\[
H^q\!\left(X,\bigwedge^{a+m}\mathbb{L}^{\bullet}_X\right)\longrightarrow
H^q\!\left(X,\bigwedge^{a+m+1}\mathbb{L}^{\bullet}_X\right)
\]
of the \(d_1\)-maps in the Hodge-to-de Rham spectral sequence.

Since every singularity of \(X\) is quasihomogeneous and planar, Lemma~\ref{lem:tail-map-euler} shows that every local negative-row tail map is an isomorphism, hence all global tail maps are isomorphisms. Moreover, by Lemma~\ref{lem:plane-local-surj} and the global argument in the proof of Proposition~\ref{prop:plane-case}, the map
\[
H^0(X,\Omega_X^1)\longrightarrow H^0\!\left(X,\bigwedge^2\mathbb{L}^{\bullet}_X\right)
\]
is surjective.

\begin{itemize}
  \item If \(m\ge 3\), then every surviving column lies in the tail region \(p\ge 3\). Hence every \(E_1\)-differential on the spectral sequence of \(\mathsf F_m(X)\) is an isomorphism, so
  \[
    E_2\bigl(\mathsf F_m(X)\bigr)=0.
  \]
  \item If \(m=2\), then the only non-tail term is the isolated group
  \[
    H^0\!\left(X,\bigwedge^2\mathbb{L}^{\bullet}_X\right).
  \]
  All remaining columns lie in the tail region \(p\ge 3\), where the \(d_1\)-maps are isomorphisms. Therefore the \(E_2\)-page of \(\mathsf F_2(X)\) is supported in a single position, so again no higher differential can occur.
  \item If \(m=1\), then the \(p=0\) column is absent. The tail maps are still isomorphisms, and the only remaining \(d_1\)-map in nonnegative rows is the surjection
  \[
    H^0(X,\Omega_X^1)\longrightarrow H^0\!\left(X,\bigwedge^2\mathbb{L}^{\bullet}_X\right).
  \]
  Therefore the \(E_2\)-page of \(\mathsf F_1(X)\) is supported in only two positions, namely those coming from \(H^0(X,\Omega_X^1)\) and \(H^1(X,\Omega_X^1)\). No higher differential can start or end at either position.
  \item If \(m\le 0\), then all columns \(p\ge 0\) occur. Thus the \(E_1\)-page of
  \(\mathsf F_m(X)\) has exactly the same shape as in the first part of
  Proposition~\ref{prop:plane-case}, up to a horizontal shift by \(-m\) in the filtration direction. The same argument therefore shows that its \(E_2\)-page is supported in only four positions, and no higher differential can have source or target among them.
\end{itemize}
Hence every \(\mathsf F_m(X)\) degenerates at the \(E_2\)-page. Since the full Hochschild-to-cyclic filtered complex is the product of these \(\mathsf F_m(X)\), its spectral sequence also degenerates at the \(E_2\)-page: cycles and boundaries are computed componentwise in this product decomposition, so each page is the product of the corresponding pages of the spectral sequences of the \(\mathsf F_m(X)\).
\end{proof}

\begin{corollary}\label{cor:equiv-HdR-HC}
Let \(X\) be an integral projective curve over \(\mathbb C\) with local complete intersection
singularities. Then the following are equivalent:
\begin{enumerate}
\item the Hodge-to-de Rham spectral sequence of \(X\) degenerates at the \(E_2\)-page;
\item the Hochschild-to-cyclic spectral sequence of \(X\) degenerates at the \(E_2\)-page;
\item every singularity of \(X\) is a quasihomogeneous plane curve singularity.
\end{enumerate}
\end{corollary}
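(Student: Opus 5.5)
The plan is to close the triangle of implications using the results already established. For \((1)\Leftrightarrow(3)\) we simply invoke Theorem~\ref{thm:main}; no further work is needed. It therefore suffices to prove \((3)\Rightarrow(2)\) and \((2)\Rightarrow(1)\).

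For \((3)\Rightarrow(2)\) we apply Proposition~\ref{prop:qhom-implies-HC} directly. Its hypotheses are exactly that \(X\) is an integral projective lci curve all of whose singularities are quasihomogeneous plane curve singularities, which is condition \((3)\) together with the standing assumptions. For \((2)\Rightarrow(1)\) we apply Proposition~\ref{prop:HC-to-HdR} with \(k=2\). Here we note that an integral projective curve over \(\mathbb C\) is in particular a separated scheme of finite type, so the proposition applies. The proposition realizes the Hodge-to-de Rham filtered complex as the \(m=0\) filtered direct summand of the negative cyclic filtered complex, so \(E_2\)-degeneration of the Hochschild-to-cyclic spectral sequence passes to the Hodge-to-de Rham spectral sequence.

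Chaining these gives \((1)\Rightarrow(3)\Rightarrow(2)\Rightarrow(1)\). The only step with real content is the filtered direct summand argument inside Proposition~\ref{prop:HC-to-HdR}, which is already proved. It relies on the To\"en--Vezzosi HKR comparison being compatible with the circle action and the de Rham differential, so that the comparison respects the filtrations and not merely the \(E_1\)-pages. Since we take that proposition as given, the corollary itself presents no obstacle beyond verifying that the hypotheses of each cited result are met.
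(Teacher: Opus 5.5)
Your proposal is correct and matches the paper's proof exactly: \((1)\Leftrightarrow(3)\) is Theorem~\ref{thm:main}, \((3)\Rightarrow(2)\) is Proposition~\ref{prop:qhom-implies-HC}, and \((2)\Rightarrow(1)\) is Proposition~\ref{prop:HC-to-HdR} with \(k=2\). The paper leaves implicit the hypothesis check that an integral projective curve is a separated scheme of finite type, and you make it explicit.
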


\begin{proof}
The equivalence of (1) and (3) is Theorem~\ref{thm:main}. 
The implication
\((2)\Longrightarrow (1)\)
is Proposition~\ref{prop:HC-to-HdR}, and the implication
\((3)\Longrightarrow (2)\)
is Proposition~\ref{prop:qhom-implies-HC}.
\end{proof}

\section{Further directions}\label{sec:further-directions}

The results of this paper suggest several natural questions beyond the case of integral projective lci curves. We record them here as possible further directions rather than precise conjectures.

\begin{enumerate}
\item \textbf{Nonreduced curves.}
Let $X$ be a projective lci curve over \(\mathbb C\) which is not necessarily reduced. Is there a singularity-theoretic criterion for the \(E_2\)-degeneration of the derived Hodge-to-de Rham spectral sequence of $X$? Even for very simple nonreduced curves, such as square-zero thickenings of reduced curves, the derived exterior powers of the cotangent complex can behave quite differently from the reduced case. It would therefore be interesting to understand whether the presence of nilpotents forces new differentials, and whether any analogue of Theorem~\ref{thm:main} survives in this setting.

\item \textbf{Higher-dimensional lci varieties.}
Let $X$ be a projective lci variety over \(\mathbb C\) of dimension at least $2$. Can one characterize the \(E_2\)-degeneration of the derived Hodge-to-de Rham spectral sequence of $X$ in terms of the singularities of $X$? At present, even the first nontrivial cases appear widely open. A reasonable place to start would be projective surfaces with isolated lci singularities, or more specifically isolated hypersurface singularities. One may also ask whether quasihomogeneity continues to play a distinguished role in higher dimensions.

\item \textbf{Degeneration at later pages.}
For singular projective varieties, can one determine the first page at which the derived Hodge-to-de Rham spectral sequence degenerates in terms of the local singularity types? Theorem~\ref{thm:main} gives a complete criterion for \(E_2\)-degeneration in the lci curve case. A natural next problem is to ask whether there exists a local invariant \(\Delta\) of a singularity that controls the first possible nonzero differential, or more generally the minimal page of degeneration. One may hope for a function \(r=r(n,\Delta)\), depending only on the dimension \(n\) and a suitable local invariant \(\Delta\), such that degeneration always occurs by the \(E_r\)-page.
\end{enumerate}

We hope that the techniques developed here for lci curves, together with the relation to the Hochschild-to-cyclic spectral sequence established in Subsection~\ref{subsec:HC}, may provide a starting point for some of these questions. 

\printbibliography

\end{document}